\numberwithin{equation}{section}
\providecommand{\norm}[1]{\big\lVert#1\big\rVert}
\newcommand{\R}{\mathbb{R}}
\newcommand{\Rd}[1][d]{\mathbb{R}^{#1}}
\newcommand{\Z}{\mathbb{Z}}
\newcommand{\N}{\mathbb{N}_+}
\newcommand{\Lt}[1][d]{L^2(\R^{#1})}
\newcommand{\G}{\mathcal{G}}
\newcommand{\F}{\mathcal{F}}
\newcommand{\V}{\mathcal{V}}
\newcommand{\Lp}[1]{L^#1}
\renewcommand{\l}{\lambda}
\renewcommand{\L}{\Lambda}
\newcommand{\Jhat}{\widehat{J}}
\newcommand{\Mhat}{\widehat{M}}
\newcommand{\Vhat}{\widehat{V}}
\DeclareMathOperator*{\essinf}{ess\,inf}
\DeclareMathOperator*{\esssup}{ess\,sup}
\newcommand{\J}{
	\left(
		\begin{array}{rc}
		0 & I\\
		-I & 0
		\end{array}
	\right)
}
\newcommand{\VP}{
	\left(
		\begin{array}{rc}
		I & 0\\
		P & I
		\end{array}
	\right)
}
\newcommand{\ML}{
	\left(
		\begin{array}{ll}
		L^{-1} & 0\\
		0 & L^T
		\end{array}
	\right)
}
\newcommand{\Sblock}{
	\left(
		\begin{array}{cc}
		A & B\\
		C & D
		\end{array}
	\right)
}
\newcommand{\SW}{
	\left(
		\begin{array}{cc}
		L^{-1}Q & L^{-1}\\
		PL^{-1}Q-L^T & PL^{-1}
		\end{array}
	\right)
}
\theoremstyle{plain}
\newtheorem{theorem}{Theorem}[section]
\theoremstyle{plain}
\newtheorem{corollary}[theorem]{Corollary}
\theoremstyle{plain}
\theoremstyle{plain}
\newtheorem{proposition}[theorem]{Proposition}
\theoremstyle{definition}
\newtheorem{definition}[theorem]{Definition}
\theoremstyle{remark}
\theoremstyle{remark}
\theoremstyle{definition}
\begin{document}
\title[Metaplectic Operators and an Extension of a Result of Lyubarskii and Nes]{On the Parity under Metapletic Operators and an Extension of a Result of Lyubarskii and Nes}
\author[Markus Faulhuber]{Markus Faulhuber}
\address{NuHAG, Faculty of Mathematics, University of Vienna, Oskar-Morgenstern-Platz 1, 1090 Vienna, Austria}
\email{markus.faulhuber@univie.ac.at}
\thanks{
	The author was supported by the Erwin--Schrödinger Program of the Austrian Science Fund (FWF): J4100-N32. Parts of this work have already been established during the first stage of the program, which the author spent with the Analysis Group at the Department of Mathematical Sciences, NTNU Trondheim, Norway
}

\begin{abstract}
	In this work we show that if the frame property of a Gabor frame with window in Feichtinger's algebra and a fixed lattice only depends on the parity of the window, then the lattice can be replaced by any other lattice of the same density without losing the frame property. As a byproduct we derive a generalization of a result of Lyubarskii and Nes, who could show that any Gabor system consisting of an odd window function from Feichtinger's algebra and any separable lattice of density $\tfrac{n+1}{n}$, $n \in \N$, cannot be a Gabor frame for the Hilbert space of square-integrable functions on the real line. We extend this result by removing the assumption that the lattice has to be separable. This is achieved by exploiting the interplay between the symplectic and the metaplectic group.
\end{abstract}

\subjclass[2010]{primary: 42C15, secondary: 81S10}
\keywords{Frame Set, Gabor Frames, Gabor Systems, Metaplectic Operators, Symplectic Group}

\maketitle

\section{Introduction}\label{sec_intro}

In this article we extend results derived by Lyubarskii and Nes, who proved that for odd functions in Feichtinger's algebra $S_0(\R)$ and separable lattices of rational density $ \tfrac{n+1}{n}$, $n \in \N$, the corresponding Gabor system is not a frame for $\Lt[]$ \cite{LyubarskiiNes_Rational_2013}. We show that the assumption of the separability of the lattice is not necessary and that their result holds for arbitrary lattices of density $\tfrac{n+1}{n}$.

We will formulate our results as well as most of the theory for $\Lt$. However, for the case $d > 1$ the lack of knowledge about frame sets and obstructions of Gabor systems is too big as to formulate a general result like Corollary \ref{cor_LyuNes} below. Of course, it is possible to produce results for $d > 1$ by using tensor products of Gabor systems and the results of Bourouihiya \cite{Bou08}, but this still leaves quite a big gap to the generic case.

The main tools in this work are metaplectic operators, which are a certain class of unitary operators acting on $\Lt$, and their interplay with the symplectic group $Sp(d)$, a subgroup of $SL(\R,2d)$. In order to generalize the result in \cite{LyubarskiiNes_Rational_2013}, we will prove and use the following result.

\begin{theorem}\label{thm_main}
	Let $g \in S_0(\Rd)$ be either an even or an odd function and consider the (hyper-cubic) lattice $\delta^{-1/2d} \, \Z^{2d}$ of density $\delta > 1$. If for all either even or odd $g \in S_0(\Rd)$ the Gabor system $\G \left(g, \delta^{-1/2d} \, \Z^{2d} \right)$ is (not) a Gabor frame for $\Lt$, then the Gabor system $\G \left(g, \delta^{-1/2d} \, S \Z^{2d} \right)$ is also (not) a Gabor frame for $\Lt$ for any symplectic matrix $S \in Sp(d)$.
\end{theorem}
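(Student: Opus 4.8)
The plan is to transfer the symplectic deformation of the lattice onto the window by means of a metaplectic operator, and then to use that metaplectic operators do not change the parity of a function. First I would recall the intertwining property of the metaplectic representation: to every $S \in Sp(d)$ one associates a unitary operator $\mu(S)$ on $\Lt$ (determined up to a sign), which restricts to a bijection of $\So(\Rd)$ onto itself, and which satisfies
\begin{equation}
	\pi(Sz) = c_S(z)\,\mu(S)\,\pi(z)\,\mu(S)^{-1},\qquad z\in\Rd[2d],\ |c_S(z)|=1,
\end{equation}
where $\pi(z)$ is the time-frequency shift by $z$. For a lattice $\L\subset\Rd[2d]$ and $h\in\Lt$ one then has, for every $f\in\Lt$,
\begin{equation}
	\sum_{\lambda\in\L}\big|\langle f,\pi(S\lambda)\mu(S)h\rangle\big|^2
	=\sum_{\lambda\in\L}\big|\langle \mu(S)^{-1}f,\pi(\lambda)h\rangle\big|^2 ,
\end{equation}
so that, $\mu(S)$ being unitary, $\G(\mu(S)h,S\L)$ is a Gabor frame for $\Lt$ with bounds $A,B$ if and only if $\G(h,\L)$ is. Applying this with $h=\mu(S)^{-1}g$ and $\L=\delta^{-1/2d}\Z^{2d}$, and using that the scalar $\delta^{-1/2d}$ commutes with $S$, so that $S\L=\delta^{-1/2d}S\Z^{2d}$, we conclude that $\G\big(g,\delta^{-1/2d}S\Z^{2d}\big)$ is a Gabor frame for $\Lt$ if and only if $\G\big(\mu(S)^{-1}g,\delta^{-1/2d}\Z^{2d}\big)$ is; note also that $\det S=1$ guarantees that $\delta^{-1/2d}S\Z^{2d}$ is again a lattice of density $\delta$.

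The crucial step is to show that $\mu(S)^{-1}g$ has the same parity as $g$. For this I would use that $-I_{2d}\in Sp(d)$ is central, and that $-I_{2d}=J^2$ with $J=\J$ the standard symplectic matrix; since $\mu(J)$ coincides, up to a unimodular factor, with the Fourier transform $\F$, the operator $\mu(-I_{2d})$ coincides up to a unimodular factor with $\F^2=P$, the parity operator $Pf(x)=f(-x)$. Centrality of $-I_{2d}$ in $Sp(d)$ forces its lift to be central in the metaplectic group, hence $\mu(-I_{2d})$, and therefore $P$, commutes with every $\mu(S)$. Consequently $\mu(S)^{\pm1}$ leaves invariant the eigenspace of $P$ for the eigenvalue $+1$ (the even functions) and the eigenspace for the eigenvalue $-1$ (the odd functions).

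It remains to combine the two steps. Fix a parity, say even. Let $g\in\So(\Rd)$ be even and let $S\in Sp(d)$. By the previous step $h:=\mu(S)^{-1}g$ is even, and it lies in $\So(\Rd)$. By hypothesis $\G\big(h,\delta^{-1/2d}\Z^{2d}\big)$ is (not) a Gabor frame for $\Lt$, whence, by the first step, $\G\big(g,\delta^{-1/2d}S\Z^{2d}\big)$ is (not) a Gabor frame for $\Lt$. Since $g$ and $S$ were arbitrary, this proves the claim; the odd case is identical.

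I expect the only delicate point to be the bookkeeping around the metaplectic representation: the sign ambiguity in $S\mapsto\mu(S)$, the cocycle $c_S$, the identification of $\mu(-I_{2d})$ with a unimodular multiple of $P$, and the precise argument that a lift of $-I_{2d}$ is central in the metaplectic group (which follows since the metaplectic group is connected and admits no nontrivial continuous homomorphism onto $\{\pm1\}$). None of these affects the argument, because frame inequalities are insensitive to unimodular constants and to the choice of sign, and the invariance $\mu(S)\So(\Rd)=\So(\Rd)$ is a standard property of Feichtinger's algebra.
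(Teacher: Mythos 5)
Your proof is correct and follows the same overall strategy as the paper: transfer the symplectic deformation of the lattice onto the window by means of the metaplectic representation, then use that metaplectic operators preserve parity. The first half is exactly the paper's Theorem \ref{thm_deform} (quoted from de Gosson), which you re-derive from the intertwining relation $\pi(Sz)=c_S(z)\mu(S)\pi(z)\mu(S)^{-1}$; this is the standard proof of that result, so no real divergence there. Where you genuinely depart from the paper is in the parity step. The paper's Proposition \ref{pro_parity} verifies parity invariance by direct computation on the three generators $\Jhat$, $\Mhat_{L,m}$, $\Vhat_P$ and then invokes the factorization of an arbitrary element of $Mp(d)$ into such generators. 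You instead argue structurally: $-I_{2d}=J^2$ is central in $Sp(d)$, its metaplectic lift is a unimodular multiple of the parity operator $\F^2$, and connectedness of the covering forces this lift to be central in $Mp(d)$, so every $\mu(S)$ commutes with the parity operator and hence preserves its $\pm 1$ eigenspaces. Your route is slicker and makes transparent the paper's remark that ``parity'' may be replaced by any property invariant under the metaplectic group --- in your language, by membership in any $Mp(d)$-invariant subspace --- while the paper's generator computation is more elementary and avoids any discussion of the covering group. One caveat on your closing paragraph: the centrality of the lift of $-I_{2d}$ is not mere bookkeeping on the level of unimodular constants. If conjugation by some $\mu(S)$ sent $P$ to $-P$, then $\mu(S)$ would swap even and odd functions and the argument would collapse; so the connectedness argument you sketch (a continuous map from the connected group $Mp(d)$ into the discrete kernel $\lbrace \pm I\rbrace$ is constant) is an essential step of the proof and should be stated as such, not relegated to a list of harmless ambiguities.
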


The above result says the following; if we can show that for a given lattice density the frame property of a Gabor system only depends on the parity of the window, then the frame property only depends on the lattice density and not on the geometry of the (symplectic) lattice.

In particular, for $d = 1$ we will use the above theorem to exclude all lattices of density $\tfrac{n+1}{n}$, $n \in \N$, from the frame set of any odd function in $S_0(\R)$. If we want to exclude all lattices of a given density from the frame set of any even function in $S_0(\R)$, we are left with the case of critical density. The reason why we cannot exclude other densities relies on the fact that any redundant Gaussian Gabor system is a frame for $\Lt[]$ as was shown by Lyubarskii \cite{Lyu92}, Seip \cite{Sei92} and Seip and Wallsten \cite{SeiWal92}.

Assuming Theorem \ref{thm_main}, we can easily extend the main result of Lyubarskii and Nes in \cite{LyubarskiiNes_Rational_2013}.
\begin{corollary}\label{cor_LyuNes}
	Let $g \in S_0(\R)$ be an odd function and consider a lattice
	\begin{equation}
		\L = \delta^{-1/2} S \Z^2, \quad S \in Sp(1) = SL(\R,2).
	\end{equation}
	Then the Gabor system $\G(g,\L)$ is not a frame whenever $\delta = \tfrac{n+1}{n}$, $n \in \N$.
\end{corollary}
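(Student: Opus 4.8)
The plan is to deduce the statement from the separable case established by Lyubarskii and Nes in \cite{LyubarskiiNes_Rational_2013} together with Theorem \ref{thm_main}, so that the corollary becomes essentially a one-line consequence. First I would fix $\delta = \tfrac{n+1}{n}$ with $n \in \N$ and single out the hyper-cubic lattice $\delta^{-1/2} \Z^2$ that appears in Theorem \ref{thm_main} for $d = 1$. This lattice is \emph{separable}, since $\delta^{-1/2} \Z^2 = \delta^{-1/2}\Z \times \delta^{-1/2}\Z$, and its density equals $\delta = \tfrac{n+1}{n}$. Hence the main result of \cite{LyubarskiiNes_Rational_2013} applies verbatim and tells us that for \emph{every} odd $g \in S_0(\R)$ the Gabor system $\G(g, \delta^{-1/2} \Z^2)$ fails to be a frame for $\Lt[]$.

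Next I would invoke Theorem \ref{thm_main} with $d = 1$ (so that $Sp(1) = SL(\R,2)$) and with the parity fixed to ``odd''. Its hypothesis --- that for all odd $g \in S_0(\R)$ the system $\G(g, \delta^{-1/2} \Z^2)$ is not a frame --- is exactly what the previous step provides. The conclusion then states that $\G(g, \delta^{-1/2} S \Z^2)$ is not a frame for $\Lt[]$, for every odd $g \in S_0(\R)$ and every symplectic $S \in SL(\R,2)$. Since the lattice $\L$ in the statement is precisely of this form, this finishes the proof. If one wishes to phrase the result, as in the abstract, for \emph{all} lattices of density $\tfrac{n+1}{n}$ and not merely for those presented as $\delta^{-1/2} S \Z^2$, one adds the elementary observation that any lattice $\L \subset \R^2$ of covolume $\delta^{-1}$ can be written as $\delta^{-1/2} S \Z^2$ with $S \in SL(\R,2)$: choose a generating matrix $A$ with $A\Z^2 = \L$, compose $A$ with the coordinate flip $(x,y) \mapsto (y,x)$ (which preserves $\Z^2$) if necessary to make $\det A > 0$, and set $S = \delta^{1/2} A$.

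Accordingly, there is no genuine obstacle in the proof of the corollary itself; the entire difficulty has been moved into Theorem \ref{thm_main}, whose proof must control how the metaplectic operator associated to a symplectic matrix $S$ acts on the parity of a window in $S_0(\Rd)$. In other words, it must exploit the interplay between $Sp(d)$ and the metaplectic group in order to convert the statement ``$\G(g, \delta^{-1/2d} \Z^{2d})$ is (not) a frame for all even/odd $g$'' into the corresponding statement for $\delta^{-1/2d} S \Z^{2d}$, and this is where the real work of the paper lies.
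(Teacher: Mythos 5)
Your proposal is correct and follows essentially the same route as the paper: the square lattice $\delta^{-1/2}\Z^2$ is a special case of the separable lattices covered by Lyubarskii and Nes, and Theorem \ref{thm_main} (with $Sp(1)=SL(\R,2)$) then transfers the non-frame property to every lattice $\delta^{-1/2}S\Z^2$ of the same density. Your closing observation that every lattice of covolume $\delta^{-1}$ in $\R^2$ can be written in this form is a worthwhile explicit addition, but otherwise the argument coincides with the paper's proof of Corollary \ref{cor_equal}.
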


This work is structured as follows:
\begin{itemize}
	\item[-] In Section \ref{sec_Gabor} we recall the basic facts about time-frequency analysis and Gabor systems and define the function space $S_0(\Rd)$.

	\medskip

	\item[-] In Section \ref{sec_symp_meta} we introduce the symplectic and the metaplectic group and state how they can be used to deform Gabor systems without changing their frame property.

	\medskip

	\item[-] In Section \ref{sec_density} we state the result of Lyubarskii and Nes and show that it does not only hold for separable lattices, but arbitrary 2-dimensional lattices.

	\medskip
	
	\item[-] In Section \ref{sec_critical} we study Gabor systems of even and odd functions from $S_0(\R)$ at critical density and derive a short proof that these systems cannot be frames.
	
	\medskip	

	\item[-] We close this work with questions which we consider to be interesting open problems.
\end{itemize}

\section{Gabor Systems and Frames}\label{sec_Gabor}
In this section we introduce the basics of time--frequency analysis. The notation is mainly according to the textbook of Gröchenig \cite{Gro01}.

A Gabor system consists of time-frequency shifted copies of a window function $g$ which is usually chosen from a subspace of $\Lt$. In this work, we consider windows from Feichtinger's algebra $S_0(\Rd)$, which we define later in this section. We denote the time-frequency shift operator by $\pi(\l)$, where $\l = (x, \omega)$ is a point in the time-frequency plane $\Rd \times \widehat{\Rd}$. Here, $\widehat{\Rd}$ is the dual group of $\Rd$, or group of characters, which is isomorphic to $\Rd$. Hence, we identify $\Rd \times \widehat{\Rd}$ with $\R^{2d}$. The explicit action of a time-frequency shift on a window $g$ is given by
\begin{equation}
	\pi(\l) g(t) = M_\omega T_x g(t) = e^{2 \pi i \omega \cdot t} g(t-x), \qquad t,x,\omega \in \Rd,
\end{equation}
where $\omega \cdot t$ denotes the Euclidean inner product on $\Rd$. The operators $T_x$ and $M_\omega$ are called time-shift (or translation) operator and frequency-shift (or modulation) operator, respectively. A tool taking a central role in the theory of Gabor analysis is the short-time Fourier transform (STFT) which we denote in the following way;
\begin{equation}
	\V_g f(\l) = \int_{\Rd} f(t) \overline{g(t-x)} e^{-2 \pi i \omega \cdot t} \, dt = \langle f , \pi(\l) g \rangle, \qquad \l = (x,\omega) \in \R^{2d}.
\end{equation}
Here, $\overline{g}$ is the complex conjugate to $g$ and $\langle f,g\rangle$ is the $\Lt$ inner product of $f,g \in \Lt$. A Gabor system is now defined as the following set of functions;
\begin{equation}
	\G(g, \L) = \{ \pi(\l) g \mid \l \in \L \subset \R^{2d} \}.
\end{equation}
Throughout this work, we assume that $\L$ is a lattice, i.e., a discrete, co-compact subgroup of $\R^{2d}$. Any lattice can be written as
\begin{equation}
	\L = \delta^{-1/2d} M \Z^{2d}, \qquad M \in SL(\R, 2d), \; \delta > 0.
\end{equation}
The parameter $\delta$ is called the density of the lattice, which gives the average number of lattice points per unit volume.

A Gabor system is a frame for $\Lt$ if and only if the frame inequality is fulfilled. This means, there exist constants $0 < A \leq B < \infty$ such that
\begin{equation}\label{eq_frame}
	A \norm{f}^2 \leq \sum_{\l \in \L} \left| \langle f , \pi(\l) g \rangle \right|^2 \leq B \norm{f}^2, \qquad \forall f \in \Lt.
\end{equation}
The frame operator associated to the Gabor system $\G(g,\L)$ is given by
\begin{equation}
	S_{g,\L} f = \sum_{\l \in \L} \langle f , \pi(\l) g \rangle \, \pi(\l) g.
\end{equation}
If $\G(g,\L)$ is a frame, i.e., the frame inequality is fulfilled for positive constants $A$ and $B$, then the frame operator is bounded and has a bounded inverse. Hence, any $f \in \Lt$ can be stably reconstructed from its measurements with respect to the Gabor system;
\begin{equation}
	f = \sum_{\l \in \L} \langle f , \pi(\l) g \rangle \, \pi(\l) g^\circ \qquad \textnormal{ where } \qquad g^\circ = S_{g,\L}^{-1} g.
\end{equation}
Likewise, using the canonical dual window, we can expand any $f \in \Lt[]$ with respect to the elements of the Gabor system $\G(g,\L)$;
\begin{equation}
	f = \sum_{\l \in \L} \langle f , \pi(\l) g^\circ \rangle \, \pi(\l) g.
\end{equation}

A function space coming along in a natural way with the STFT is Feichtinger's algebra. Introduced by Feichtinger in the early 1980s \cite{Fei81}, it has become a popular function space in time-frequency analysis to choose the window $g$ from.
\begin{definition}[Feichtinger's Algebra]
	Feichtinger's algebra $S_0(\Rd)$ consists of all elements $g \in \Lt$ such that
	\begin{equation}
		\norm{\V_g g}_{L^1(\R^{2d})} = \iint_{\R^{2d}} \left| \V_g g(\l) \right| \, d\l < \infty, \qquad \l = (x,\omega) \in \R^{2d} .
	\end{equation}
\end{definition}
We note that $S_0(\Rd)$ is actually a Banach space, invariant under the Fourier transform and time--frequency shifts. It contains the Schwartz space $\mathcal{S}(\Rd)$ and it is dense in $\Lp{p}(\Rd)$, $p \in [1,\infty [$. It is for these properties that it is a quite popular function space in time--frequency analysis and the literature on the subject is huge. For more details on $S_0$ we refer to the survey by Jakobsen \cite{Jakobsen_S0_2018} and the references therein.

For $g \in S_0(\Rd)$, the upper bound in \eqref{eq_frame} is always finite which follows from the results of Tolimieri and Orr \cite{TolOrr95} (see also \cite{FeiZim98}). Hence, for windows in $S_0(\Rd)$, in order to check whether a Gabor system is a frame or not, it suffices to check whether the lower frame bound is positive or not. There are several equivalent approaches to check this, which usually involve (vector--valued) Zak transform methods and the Poisson summation formula. Functions in $S_0(\Rd)$ are at least continuous and there are no issues when it comes to integrability and summability. Also, Poisson's summation formula holds point-wise in this setting \cite{Gro_Poisson_1996}.

\section{The Symplectic and the Metaplectic Group}\label{sec_symp_meta}
In this section we are going to introduce the necessary tools to prove Theorem \ref{thm_main}. These tools are a group of matrices, the symplectic matrices, and a group of unitary operators, the metaplectic operators. There is a close connection between the symplectic and the metaplectic group and they are widely used in mathematical physics and quantum mechanics. In time-frequency analysis they can be used to prove deformation results about Gabor systems. For further reading we refer to the texbooks of Folland \cite{Fol89}, de Gosson \cite{Gos11,Gosson_Wigner_2017} or Gröchenig \cite{Gro01} and some more details are given in Appendix \ref{app}.

\begin{definition}[Generator Matrices]\label{def_gen_matrix}
	We define the following $2d \times 2d$ matrices which we call generator matrices for the symplectic group.
	\begin{itemize}
		\item[-] The standard symplectic matrix
		\begin{equation}
			J = \J,
		\end{equation}
		with $I$ being the $d \times d$ identity matrix.
		\item[-] The dilation matrices
		\begin{equation}
			M_L = \ML \label{eq_dilation_matrix}
		\end{equation}
		with $L$ being an invertible matrix, i.e., $\det(L) \neq 0$.
		\item[-] The shearing matrices
		\begin{equation}
			V_P = \VP \label{eq_lower_tri_matrix}
		\end{equation} 
		with $P$ being a real, symmetric matrix, i.e., $P = P^T$.
	\end{itemize}
\end{definition}
We note that any symplectic matrix $S \in Sp(d)$ is a finite product of the generator matrices from Definition \ref{def_gen_matrix} (see Appendix \ref{app}). In general, the group of symplectic matrices $Sp(d)$ is a proper subgroup of the special linear group $SL(\R,2d)$. Only if $d=1$, we have $Sp(1) = SL(\R,2)$.

\begin{definition}[Generator Operators]\label{def_gen_operator}
	We define the following unitary operators on $\Lt$ which we call the generator operators for the metaplectic group.
	\begin{itemize}
		\item[-] The modified Fourier transform
		\begin{equation}
			\Jhat g(t) = i^{-d/2} \F g(t)
		\end{equation}
		\item[-] The dilation operator
		\begin{equation}
			\Mhat_{L,m} \, g(t) = i^m \sqrt{|\det{L}|} \, g(L t), \quad \det{L} \neq 0, \; m \in \{0,1,2,3\}
		\end{equation}
		\item[-] The (linear) chirp
		\begin{equation}
			\Vhat_P \, g(t) = e^{\pi i Pt \cdot t} g(t), \quad P = P^T
		\end{equation}
		In the sequel we will write $P t^2$ instead of $Pt \cdot t$.
	\end{itemize}	 
\end{definition}
We note that any metaplectic operator $\widehat{S} \in Mp(d)$ is a finite product of the generator operators from Definition \ref{def_gen_operator}. More details are given in the appendix.

The integer $m$ is the so-called Maslov index (see e.g.~\cite{GosLue14}), but, just as the factor $i^{-d/2}$, it does in no way affect the frame property, as can be seen directly from the frame inequality \eqref{eq_frame}, and can simply be ignored for our purposes.

A proof of the following result can for example be found in \cite{Gos15}.
\begin{theorem}\label{thm_deform}
	The Gabor systems $\G(g,\L)$ and $\G\left(\widehat{S}g, S \L\right)$ possess the same sharp frame bounds. In particular, $\G(g,\L)$ is a frame with bounds $A$ and $B$ if and only if the Gabor system $\G\left(\widehat{S}g, S \L\right)$ is a frame with bounds $A$ and $B$.
\end{theorem}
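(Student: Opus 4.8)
The plan is to prove the single operator identity $S_{\widehat{S}g,S\L} = \widehat{S}\,S_{g,\L}\,\widehat{S}^{-1}$, that is, that the two frame operators are unitarily conjugate. Everything in the statement then follows, since a unitary conjugation preserves the spectrum and the sharp frame bounds of a Gabor system are exactly the infimum and the supremum of the spectrum of its frame operator (the system being a frame precisely when this infimum is strictly positive and the supremum finite). The identity rests on one ingredient, the \emph{symplectic covariance} of time-frequency shifts: if $\widehat{S} \in Mp(d)$ lies above $S \in Sp(d)$, then there is a unimodular-valued function $\l \mapsto c_S(\l)$ with
\begin{equation}
	\widehat{S}\,\pi(\l)\,\widehat{S}^{-1} = c_S(\l)\,\pi(S\l), \qquad \l \in \R^{2d}.
\end{equation}
Granting this, one reads off $\pi(S\l)\widehat{S}g = c_S(\l)^{-1}\widehat{S}\pi(\l)g$, and since $\widehat{S}$ is unitary, $\langle f, \pi(S\l)\widehat{S}g\rangle = c_S(\l)\,\langle \widehat{S}^{-1}f, \pi(\l)g\rangle$. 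Substituting both into the defining sum of $S_{\widehat{S}g,S\L}$ and re-indexing via the bijection $\l \mapsto S\l$ from $\L$ onto $S\L$, the factors $c_S(\l)$ and $c_S(\l)^{-1}$ cancel and the conjugation identity drops out; at the level of the frame sum the same computation reads $\sum_{\mu \in S\L}|\langle f,\pi(\mu)\widehat{S}g\rangle|^2 = \sum_{\l\in\L}|\langle \widehat{S}^{-1}f,\pi(\l)g\rangle|^2$ for every $f \in \Lt$, and as $\widehat{S}^{-1}$ is a surjective isometry of $\Lt$, the pairs $(A,B)$ satisfying the frame inequality \eqref{eq_frame} coincide for $\G(g,\L)$ and $\G(\widehat{S}g,S\L)$.

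It remains to establish the symplectic covariance relation, and here the strategy is to reduce to the generator operators. The relation is stable under composition --- if $\widehat{S}_j\,\pi(\l)\,\widehat{S}_j^{-1} = c_j(\l)\,\pi(S_j\l)$ for $j=1,2$, then $\widehat{S}_1\widehat{S}_2$ satisfies it with phase $\l \mapsto c_2(\l)\,c_1(S_2\l)$, again unimodular --- and every metaplectic operator is a finite product of the generators $\Jhat, \Mhat_{L,m}, \Vhat_P$, so it suffices to check those three. For the dilation $\Mhat_{L,m}$ the conjugation is a plain linear substitution in the time and the frequency variable and gives $\pi(L^{-1}x, L^T\omega) = \pi(M_L(x,\omega))$ with trivial phase. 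For $\Jhat$ the intertwining relations $\F T_x = M_{-x}\F$ and $\F M_\omega = T_\omega\F$ give $\pi(\omega,-x) = \pi(J(x,\omega))$ up to a phase. For the chirp $\Vhat_P$ a brief completion of the square gives $\Vhat_P\,\pi(x,\omega)\,\Vhat_P^{-1} = e^{-\pi i Px^2}\,\pi(x, Px+\omega) = e^{-\pi i Px^2}\,\pi(V_P(x,\omega))$. In each case the matrix acting on the phase-space variable is precisely the corresponding generator matrix of Definition \ref{def_gen_matrix} --- which is what justifies the terminology --- and the prefactor is unimodular; composing, the relation holds for all of $Mp(d)$.

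These verifications are routine; the one place that calls for care is the bookkeeping forced by the metaplectic representation being only projective (two-valued) over $Sp(d)$. The clean way around this is to let $\widehat{S}$ denote \emph{any} unitary operator lying above $S$: the statement is then independent of the choice, since replacing $\widehat{S}$ by $z\widehat{S}$ with $|z|=1$ alters neither $\widehat{S}\,\pi(\l)\,\widehat{S}^{-1}$ nor $|\langle\widehat{S}^{-1}f,\pi(\l)g\rangle|$, so the Maslov-index factor $i^m$ and the factor $i^{-d/2}$ in Definition \ref{def_gen_operator} are genuinely irrelevant, as already noted in the text. I also observe that the argument uses neither $g \in S_0(\Rd)$ nor that $\L$ be a lattice: the identity $\sum_{\mu\in S\L}|\langle f,\pi(\mu)\widehat{S}g\rangle|^2 = \sum_{\l\in\L}|\langle\widehat{S}^{-1}f,\pi(\l)g\rangle|^2$ holds verbatim for any $g \in \Lt$ and any index set $\L \subset \R^{2d}$, so Theorem \ref{thm_deform} is true in that generality; a fully detailed account is in \cite{Gos15}.
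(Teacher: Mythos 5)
Your proof is correct. The paper itself offers no argument for Theorem \ref{thm_deform} --- it simply refers the reader to \cite{Gos15} --- so there is no internal proof to compare against; what you have written is the standard symplectic-covariance argument that underlies the cited reference (and, e.g., Gr\"ochenig's treatment of metaplectic invariance), carried out in full. The three generator computations are right: $\Mhat_{L,m}\pi(x,\omega)\Mhat_{L,m}^{-1}=\pi(L^{-1}x,L^{T}\omega)$ with trivial phase, $\Jhat\pi(x,\omega)\Jhat^{-1}=e^{2\pi i x\cdot\omega}\pi(\omega,-x)$, and $\Vhat_P\pi(x,\omega)\Vhat_P^{-1}=e^{-\pi i Px^2}\pi(x,Px+\omega)$, each matching the corresponding generator matrix of Definition \ref{def_gen_matrix}; the composition rule for the phases and the factorization of $Mp(d)$ into generators (Theorem \ref{thm_factorization_meta}) then give covariance for all of $Mp(d)$. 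The passage from covariance to the conclusion is also clean: unimodularity of $c_S(\l)$ makes the phases drop out of both the frame-operator conjugation $S_{\widehat{S}g,S\L}=\widehat{S}S_{g,\L}\widehat{S}^{-1}$ and the scalar identity $\sum_{\mu\in S\L}|\langle f,\pi(\mu)\widehat{S}g\rangle|^2=\sum_{\l\in\L}|\langle\widehat{S}^{-1}f,\pi(\l)g\rangle|^2$, and surjectivity of the isometry $\widehat{S}^{-1}$ yields equality of the sets of admissible frame bounds, which is slightly more robust than the spectral argument alone (it does not presuppose that either system is Bessel). Your closing observations --- that the statement is insensitive to the unimodular ambiguity in the choice of $\widehat{S}$ over $S$, consistent with the paper's remark that the Maslov index and the factor $i^{-d/2}$ are irrelevant, and that neither $g\in S_0(\Rd)$ nor the lattice structure of $\L$ is used --- are accurate.
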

The above theorem shows that the frame property is kept if, both, the lattice and the window are deformed in an appropriate way. Using the above result, it was shown in \cite{Faulhuber_Invariance_2016} that for a given 1-dimensional, generalized Gaussian there exists an uncountable family of lattices which all yield the same optimal frame bounds. This result is easily transferred to 1-dimensional Hermite functions. On the other hand, it was shown that for any lattice, there always exists a countable family of windows such that the optimal frame bounds are kept.

In this work, we will use Theorem \ref{thm_deform} to prove Theorem \ref{thm_main}. Before we get to the proof of Theorem \ref{thm_main}, we need another result involving metaplectic operators. For brevity, we will write $g(t) = \pm g(-t)$ if either $g(t) = g(-t)$ or $g(t) = -g(-t)$.
\begin{proposition}\label{pro_parity}
	Let $g \in S_0(\Rd)$, with the property that $g(t) = \pm g(-t)$ and let $\widehat{S} \in Mp(d)$, then $\widehat{S}g(t) \in S_0(\Rd)$ and $\widehat{S}g(t) = \pm \widehat{S}g(-t)$.
\end{proposition}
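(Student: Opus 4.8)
The plan is to treat the two assertions separately. For the membership $\widehat{S}g \in S_0(\Rd)$ I would invoke the standard invariance properties of Feichtinger's algebra: $S_0(\Rd)$ is closed under the Fourier transform, under invertible linear substitutions $t \mapsto Lt$, and under multiplication by unimodular chirps $e^{\pi i Pt^2}$ (all of this is in the survey \cite{Jakobsen_S0_2018}). Since, as recalled in Section \ref{sec_symp_meta} and Appendix \ref{app}, every $\widehat{S} \in Mp(d)$ is a finite product of the generator operators $\Jhat$, $\Mhat_{L,m}$, $\Vhat_P$ of Definition \ref{def_gen_operator}, and each generator performs exactly one of these three operations up to a scalar factor of modulus one, the invariance of $S_0$ under $\widehat{S}$ follows by composing these closures; alternatively one may simply quote the metaplectic invariance of $S_0$ directly.

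For the parity claim the strategy is again to reduce to the generators and then induct on the length of a factorization $\widehat{S} = \widehat{S}_k \cdots \widehat{S}_1$ into generator operators. Writing $\mathcal{P}$ for the parity involution $\mathcal{P}g(t) = g(-t)$, it suffices to show that $\mathcal{P}$ commutes with each generator, since then $\mathcal{P}\widehat{S} = \widehat{S}\mathcal{P}$ and applying this to $g$ with $\mathcal{P}g = \pm g$ gives $\mathcal{P}(\widehat{S}g) = \widehat{S}(\mathcal{P}g) = \pm\widehat{S}g$. For the chirp $\Vhat_P$ this holds because $P(-t)\cdot(-t) = Pt\cdot t$, so the multiplier $e^{\pi i Pt^2}$ is even; for the dilation operator $\Mhat_{L,m}$ it holds because $g(L(-t)) = g(-(Lt))$; and for the modified Fourier transform $\Jhat$ it reduces to the elementary identity $\F g(-\xi) = \F(\mathcal{P}g)(\xi)$, obtained from the substitution $s \mapsto -s$ in the defining integral. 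The scalar prefactors $i^{-d/2}$ and $i^m$ are constants and hence play no role here.

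I do not expect a genuine obstacle in this argument; the only points needing a little care are that the decomposition of an arbitrary $\widehat{S} \in Mp(d)$ into the generators of Definition \ref{def_gen_operator} is indeed available, and that the unimodular scalar factors (in particular the Maslov index) are harmless for the parity bookkeeping — both of which are already addressed in Section \ref{sec_symp_meta}. A conceptually cleaner but equivalent route, which I would mention as a remark, is to note that $\mathcal{P}$ is itself a constant multiple of the metaplectic operator attached to $-I_{2d} \in Sp(d)$; since $-I_{2d}$ is central in $Sp(d)$, its lift to $Mp(d)$ commutes with every $\widehat{S}$ up to a sign, and that sign is irrelevant for a statement of the form $\mathcal{P}\widehat{S}g = \pm\widehat{S}g$.
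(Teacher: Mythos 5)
Your proposal is correct and follows essentially the same route as the paper: decompose $\widehat{S}$ into the generator operators of Definition \ref{def_gen_operator} and check directly that each of $\Jhat$, $\Mhat_{L,m}$, $\Vhat_P$ preserves parity, the three computations being exactly those in the paper's proof. Your treatment of the $S_0$-membership via the known closure properties of Feichtinger's algebra is in fact a little more careful than the paper's one-line appeal to unitarity, and the closing remark identifying the parity involution with the metaplectic lift of $-I_{2d}$ is a valid, cleaner variant, but neither changes the substance of the argument.
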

\begin{proof}
	Since $\widehat{S}$ is unitary, it is obvious that if $g \in S_0(\Rd)$ then $\widehat{S}g \in S_0(\Rd)$.
	
	The proof that metaplectic operators keep the parity of a function is elementary. Throughout, $c$ will denote the appropriate global phase factor as described in Definition \ref{def_gen_operator}.
	
	Then, we have
	\begin{equation}
		\Jhat g(t) = c \, \int_{\Rd} \pm g(-t') e^{-2 \pi i t \cdot t'} \, dt' = c \, \int_{\Rd} \pm g(t') e^{2 \pi i t \cdot t'} \, dt' = \pm \Jhat g(-t),
	\end{equation}
	\begin{equation}
		\Mhat_{L,m} g(t) = c \, \sqrt{|\det{L}}| g(L t) = \pm c \, \sqrt{|\det(L)|} g(-L t) = \pm \Mhat_{L,m} g(-t)
	\end{equation}
	and
	\begin{equation}
		\Vhat_P g(t) = c \, e^{\pi i P t^2} g(t) = \pm c \, e^{\pi i P t^2} g(-t) = \pm \Vhat_P g(-t).
	\end{equation}
	The result follows from the fact that any metaplectic operator $\widehat{S} \in Mp(d)$ is a finite composition of the above operators (more details are given in Appendix \ref{app}).
\end{proof}

We are now able to prove Theorem \ref{thm_main}.

\subsection*{Proof of Theorem \ref{thm_main}}
Assume that $g \in S_0(\Rd)$ is either an even or odd function. As already stated, for a window in $S_0(\Rd)$ the frame property solely depends on the lower frame bound. Let $\delta > 1$ be fixed, and consider the Gabor system
\begin{equation}
	\G(g, \delta^{-1/2d} \, \Z^{2d})
\end{equation}
which we assume (not) to be frame for all $g \in S_0(\Rd)$ which are either even or odd. It follows by assumption and Proposition \ref{pro_parity} that the Gabor system
\begin{equation}
	\G \left( \widehat{S}^{-1} g, \delta^{-1/2d} \, \Z^{2d} \right)
\end{equation}
is also (not) a frame. By using Theorem \ref{thm_deform} we see that the systems
\begin{equation}
	\G \left( \widehat{S}^{-1} g, \delta^{-1/2d} \, \Z^{2d} \right) \qquad \textnormal{ and } \qquad \G (g , \delta^{-1/2d} \, S \Z^{2d})
\end{equation}
possess the same sharp frame bounds. Hence, it follows that if for any either even or odd $g \in S_0(\Rd)$ the system $\G(g, \delta^{-1/2d} \, \Z^{2d})$ is (not) a frame, then $\G (g , \delta^{-1/2d} \, S \Z^{2d})$ is (not) a frame and this is true for any $S \in Sp(d)$.
\begin{flushright}
	$\square$
\end{flushright}

We remark that, in Theorem \ref{thm_main}, the parity of $g$ can be exchanged for any other property of the window which stays invariant under metaplectic operators. Also, we focused on the function space $S_0(\Rd)$, as the motivation was to extend the results of Lyubarskii and Nes \cite{LyubarskiiNes_Rational_2013}. However, using the continuity results of metaplectic operators in, e.g., \cite{CorNic08JFA} \cite{CorNic08JDE}, extensions of Theorem \ref{thm_main} to Wiener amalgam spaces should be possible. As the results in \cite{LyubarskiiNes_Rational_2013} were only stated for $S_0(\R)$ ($d$=1), we avoided technical details by assuming $g \in S_0(\Rd)$.

\section{Gabor Systems on the Line at Density \texorpdfstring{$\frac{n+1}{n}$}{(n+1)/n}}\label{sec_density}
Now, we restrict the setting to $\Lt[]$, i.e., $d$ = 1, and generalize the main result of Lyubarskii and Nes.
\begin{theorem}[Lyubarskii, Nes \cite{LyubarskiiNes_Rational_2013}]\label{thm_LyuNes}
	Let $g \in S_0(\R)$ be an odd function, i.e., $g(t) = -g(-t)$ and let $M_L \in SL(\R,2)$ be a dilation matrix as defined in Definition \ref{def_gen_matrix}. Then, for $\delta \in \{ \tfrac{n+1}{n}, n \in \N\}$ the Gabor system $\G \left(g, \delta^{-1/2} M_L \Z^2 \right)$ is not a frame.
\end{theorem}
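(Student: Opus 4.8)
The plan is to reduce the frame question to a rank condition on the Zibulski--Zeevi matrix of the Gabor system and then to exploit the symmetry that oddness of $g$ forces on the Zak transform.

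First I would normalize the lattice. Since $M_L = \operatorname{diag}(a^{-1},a)$ for $d=1$, the lattice $\delta^{-1/2}M_L\Z^2$ is a rectangular lattice $\alpha\Z\times\beta\Z$ with $\alpha\beta = \delta^{-1} = \tfrac{n}{n+1}$, already a reduced fraction $\tfrac{p}{q}$ with $p=n$, $q=n+1$. (If convenient one may, by Proposition \ref{pro_parity} and Theorem \ref{thm_deform}, pass through a metaplectic dilation to the square lattice $\alpha=\beta=\sqrt{n/(n+1)}$, since such a dilation carries odd windows bijectively to odd windows and preserves sharp frame bounds; but this reduction is not essential for what follows.) Next I would invoke the rational-density fiberization: for $\alpha\beta=p/q$ the analysis operator of $\G(g,\alpha\Z\times\beta\Z)$ is unitarily equivalent to multiplication, over a fundamental domain, by a $q\times p$ matrix-valued symbol $\Phi_g$ whose entries are modulates and translates of the ordinary Zak transform $Zg$. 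Because $g\in S_0(\R)$ the Zak transform is continuous and the upper frame bound is automatically finite, so on the compact fundamental domain $\G(g,\alpha\Z\times\beta\Z)$ is a frame if and only if the $(n+1)\times n$ matrix $\Phi_g(x,\omega)$ has full column rank $n$ at every point. It therefore suffices to exhibit a single point at which the $n$ columns of $\Phi_g$ are linearly dependent.

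The mechanism producing such a point is the parity symmetry. For $g(t)=-g(-t)$ one has $Zg(-x,-\omega)=-Zg(x,\omega)$, which, together with the quasi-periodicity of $Zg$, forces $Zg$ to vanish at the three half-period points $(0,0)$, $(\tfrac12,0)$, $(0,\tfrac12)$ (in the coordinates adapted to the lattice). The key point I would need to establish is that for the denominator $q=n+1$ the arithmetic of the modulation shifts (spaced by $\tfrac1{n+1}$) and of the translation shifts appearing in $\Phi_g$ lines up so that, at a suitably chosen fiber point, the involution $(x,\omega)\mapsto(-x,-\omega)$ identifies the rows and the columns of $\Phi_g$ pairwise with a net sign $-1$; combined with the three forced zeros of $Zg$ this should degenerate an $n\times n$ minor of $\Phi_g$ there and force $\operatorname{rank}\Phi_g<n$, so that $\G(g,\alpha\Z\times\beta\Z)$ is not a frame.

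The hard part will be exactly this last step: making the combinatorics precise --- choosing the right fiber point and the vanishing minor, bookkeeping the quasi-periodicity phases (which at that point reduce to explicit roots of unity), and verifying that the alignment really works for every $n$ and uses the reduced denominator $n+1$ in an essential way. A further subtlety to watch is that the argument must genuinely use both $g(0)=0$ and the odd sign, not merely the reflection $|g(t)|=|g(-t)|$; otherwise it would spuriously apply to even windows such as the Gaussian, which is a frame at every density greater than $1$.
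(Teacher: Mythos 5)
First, note that the paper does not prove Theorem \ref{thm_LyuNes} at all: it is imported verbatim from \cite{LyubarskiiNes_Rational_2013} and used as a black box, so there is no ``paper's proof'' to compare against; the relevant benchmark is the original Lyubarskii--Nes argument. Your skeleton does match the general strategy of that argument: reduce to a rectangular lattice $\alpha\Z\times\beta\Z$ with $\alpha\beta=\tfrac{n}{n+1}$ in lowest terms, fiberize via the Zibulski--Zeevi matrix of Zak-transform samples (continuous for $g\in S_0(\R)$, so the frame property is equivalent to full column rank at \emph{every} fiber point), and feed in the parity symmetry $Zg(-x,-\omega)=-Zg(x,\omega)$, which together with quasi-periodicity correctly forces $Zg$ to vanish at $(0,0)$, $(\tfrac12,0)$ and $(0,\tfrac12)$. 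All of this is sound.

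However, there is a genuine gap, and it sits exactly where the theorem lives. The step you defer --- choosing the fiber point, tracking the quasi-periodicity phases, and showing that the reflection pairs up the rows and columns of the $(n+1)\times n$ matrix with a net sign $-1$ so that an $n\times n$ minor degenerates \emph{for every} $n$ --- is the entire content of the result. Everything you do establish (fiberization, the three forced zeros of $Zg$) is available at any rational density, where the conclusion is false: the first Hermite function is odd and generates frames on many rectangular lattices of rational density not of the form $\tfrac{n+1}{n}$, as the numerics in \cite{LyubarskiiNes_Rational_2013} themselves indicate. So the arithmetic of the reduced denominator $n+1$ must enter in an essential and nontrivial way, and you have not exhibited the kernel vector (or vanishing minor) that witnesses the rank drop, nor verified that the claimed row/column involution is even well defined modulo the $\tfrac1{n+1}$-spaced shifts for general $n$. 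Your own closing caveat concedes this. As it stands the proposal is a plausible plan, not a proof; to complete it you would need to carry out the explicit linear-algebra computation on the Zibulski--Zeevi matrix at the chosen point, which is precisely the technical core of the Lyubarskii--Nes paper.
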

By applying Theorem \ref{thm_main} we get the following result.
\begin{corollary}\label{cor_equal}
	Let $g \in S_0(\R)$ be an odd function, i.e., $g(t) = -g(-t)$, and let
	\begin{equation}
		\L = \delta^{-1/2} S \Z^2, \qquad S \in Sp(1) = SL(\R,2),
	\end{equation}
	be a lattice of density $\delta$. The, the following are equivalent
	\begin{enumerate}[(i)]
		\item The Gabor system $\G(g, \delta^{-1/2} \Z^2)$ is not a frame for $\delta \in \{ \tfrac{n+1}{n}, n \in \N \}$.\label{thm_equal_square}
		\item The Gabor system $\G(g, \delta^{-1/2} M_L \Z^2)$ is not a frame for $\delta \in \{ \tfrac{n+1}{n}, n \in \N\}$.\label{thm_equal_rect}
		\item The Gabor system $\G(g, \delta^{-1/2} S \Z^2)$ is not a frame for $\delta \in \{ \tfrac{n+1}{n}, n \in \N\}$.\label{thm_equal_symp}
	\end{enumerate}
\end{corollary}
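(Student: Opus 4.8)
The plan is to establish the cyclic chain of implications (i)$\,\Rightarrow\,$(iii)$\,\Rightarrow\,$(ii)$\,\Rightarrow\,$(i), which gives the claimed equivalence. Throughout, the statements (i)--(iii) are to be read as assertions about \emph{every} odd $g\in S_0(\R)$ (this universal quantifier is what makes the metaplectic substitution in the key step legitimate), and the relevant densities are $\delta=\tfrac{n+1}{n}$, $n\in\N$.

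The two implications (iii)$\,\Rightarrow\,$(ii) and (ii)$\,\Rightarrow\,$(i) are pure specializations, once one records that the dilation matrices sit inside $Sp(1)=SL(\R,2)$. Indeed, for $d=1$ a dilation matrix $M_L$ corresponds to a nonzero scalar $L=\ell$ and equals $\mathrm{diag}(\ell^{-1},\ell)\in SL(\R,2)$; hence (iii), applied with the symplectic matrix $S=M_L$, yields (ii). Likewise $M_1=I$ is a dilation matrix, so (ii), applied with $L=1$, yields (i).

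The one substantive implication, (i)$\,\Rightarrow\,$(iii), is exactly Theorem \ref{thm_main} specialized to $d=1$, with the invariant ``parity'' taken to be oddness. By Proposition \ref{pro_parity} the class of odd functions in $S_0(\R)$ is invariant under every metaplectic operator, so the hypothesis of Theorem \ref{thm_main} --- that $\G(g,\delta^{-1/2}\Z^2)$ fails to be a frame for all odd $g\in S_0(\R)$ --- is precisely statement (i), and its conclusion --- that $\G(g,\delta^{-1/2}S\Z^2)$ fails to be a frame for all odd $g$ and all $S\in Sp(1)$ --- is precisely (iii). Unwinding the argument: given $S\in Sp(1)$, choose a metaplectic operator $\widehat S$ lying above $S$; then $\widehat S^{-1}g$ is again odd by Proposition \ref{pro_parity}, hence $\G(\widehat S^{-1}g,\delta^{-1/2}\Z^2)$ is not a frame by (i), and by Theorem \ref{thm_deform} this system shares its sharp frame bounds with $\G(g,\delta^{-1/2}S\Z^2)$, which is therefore not a frame either.

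I do not expect a genuine obstacle here: the corollary is essentially a bookkeeping consequence of Theorem \ref{thm_main}. The only points needing care are the quantifier remark above and the inclusion of dilation matrices in $Sp(1)$ that closes the cycle. Finally, combining this equivalence with Theorem \ref{thm_LyuNes}, which establishes (ii) outright, shows that all three statements in fact hold, so that Corollary \ref{cor_LyuNes} follows immediately.
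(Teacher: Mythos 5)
Your proof is correct and follows essentially the same route as the paper: the implications (iii)$\Rightarrow$(ii)$\Rightarrow$(i) are trivial specializations, and the substantive step (i)$\Rightarrow$(iii) is Theorem \ref{thm_main} combined with Proposition \ref{pro_parity} and Theorem \ref{thm_deform}. The only (welcome) difference is that you make explicit the universal quantification over odd $g$ needed to invoke Theorem \ref{thm_main}, whereas the paper instead cites Theorem \ref{thm_LyuNes} alongside Theorem \ref{thm_main} to supply that hypothesis; your reading is the cleaner way to state the equivalence.
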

\begin{proof}
	The implications \eqref{thm_equal_symp} $\Rightarrow$ \eqref{thm_equal_rect} $\Rightarrow$ \eqref{thm_equal_square} are obvious. The implication \eqref{thm_equal_square} $\Rightarrow$ \eqref{thm_equal_symp} follows by combining Theorem \ref{thm_LyuNes} and Theorem \ref{thm_main}.
\end{proof}

We remark that we included \ref{cor_equal} \eqref{thm_equal_rect} in the above list as results about separable Gabor systems, which are derived under the action of the (symplectic) subgroup of dilation matrices $M_L$, are found more often in the literature than results on Gabor systems with general (symplectic) lattices.

Numerical experiments in \cite{LyubarskiiNes_Rational_2013} lead to the conjecture that, besides Theorem \ref{thm_LyuNes}, for the first Hermite function and separable lattice there are no other obstructions which prevent the resulting Gabor system from being a frame. By a suitable reformulation of Theorem \ref{thm_main}, we actually only have to consider square lattices. The suitable reformulation is to replace the property odd by the property of $g$ being a dilated first Hermite function, as this property is kept under the metaplectic operators in question (i.e., dilation operators). We will discuss more general, but related questions in Section \ref{sec_problems}.

\section{Gabor Systems at Critical Density}\label{sec_critical}
There are many proofs that a Gabor system with a window in $S_0(\Rd)$ cannot constitute a frame at critical density, i.e., $\delta = 1$. There is a variety of Balian-Low type theorems or density theorems, also holding for irregular Gabor frames \cite{AscFeiKai14}, which means that the index set is not a lattice. The correct notion of density is then to consider the lower Beurling density of the index set. We also refer to \cite{BenHeiWal_BLT_98}, \cite{Gro01}, or \cite{Hei07} and the references in \cite{AscFeiKai14} for more details.

By using Theorem \ref{thm_main}, we find a quick proof for a Balian-Low type theorem for even and odd functions in $S_0(\R)$ (which is of course only a special case of the results in \cite{AscFeiKai14}). The proof is similar to the proof in \cite{Faulhuber_Note_2018}, where it was shown that we cannot have a Gabor frame for an odd function $g \in S_0(\Rd)$ and a symplectic lattice of density $2^d$. Recall that the cross-ambiguity function of $f$ and $g$ is given by
\begin{equation}
	A_g f(x, \omega) = \int_{\Rd} f(t+\tfrac{x}{2}) \overline{g(t-\tfrac{x}{2})} e^{-2 \pi i \omega \cdot t} \, dt = e^{\pi i x \cdot \omega} V_g f (x, \omega).
\end{equation}
Closely related to the ambiguity function is the Wigner distribution, given by
\begin{equation}
	W_g f(x, \omega) = \int_{\Rd} f(x+\tfrac{t}{2}) \overline{g(x-\tfrac{t}{2})} e^{-2 \pi i \omega \cdot t} \, dt = 2^d A_{g^\vee} f(2x, 2\omega),
\end{equation}
where $g^\vee(t) = g(-t)$ is the reflection of $g$. Furthermore, the symplectic Fourier transform of a $2d$-dimensional function $F(\l)$, $\l = (x,\omega)$, is given by
\begin{align}
	\F_\sigma F(x, \omega) & = \iint_{\R^{2d}} F(x',\omega') e^{2 \pi i (x \cdot \omega' - \omega \cdot x')} \, d(x', \omega') = \int_{\R^2} F(\l') e^{-2 \pi i \, \sigma(\l, \l')} \, d \l'\\
	& = \F F(-\omega, x).
\end{align}
Here, $\F$ denotes the usual (planar) Fourier transform and $\sigma$ is the standard symplectic form, defined in Appendix \ref{app}. The ambiguity function and the Wigner transform are symplectic Fourier transforms of one another, i.e.,
\begin{equation}
	\F_\sigma \left( A_gf  \right)(\l) = W_g f (\l) \qquad \textnormal{ and } \qquad \F_\sigma \left( W_g f \right)(\l) = A_g f (\l).
\end{equation}

Consider the case $d =1 $ and recall the following proposition by Janssen \cite{Jan96} (see also \cite{Faulhuber_Note_2018}).
\begin{proposition}[Janssen \cite{Jan96}]\label{pro_Janssen}
	Let $g \in \Lt[]$ and $\alpha, \beta \in \R_+$ with $(\alpha \beta)^{-1} = \delta \in \N$. Assume that
	\begin{equation}
		\sum_{k,l \in \Z} \left| V_g g \left( \tfrac{k}{\beta}, \tfrac{l}{\alpha} \right) \right| < \infty.
	\end{equation}
	Then, the Gabor system $\G(g, \alpha \Z \times \beta \Z)$ possesses the sharp frame bounds
	\begin{equation}
		A = \essinf_{(x,\omega) \in \R^{2}} (\alpha \beta)^{-1} \sum_{k,l \in \Z} V_g g \left( \tfrac{k}{\beta}, \tfrac{l}{\alpha} \right) e^{2 \pi i (k \omega + l x)}
	\end{equation}
	and
	\begin{equation}
		B = \esssup_{(x,\omega) \in \R^{2}} (\alpha \beta)^{-1} \sum_{k,l \in \Z} V_g g \left( \tfrac{k}{\beta}, \tfrac{l}{\alpha} \right) e^{2 \pi i (k \omega + l x)}.
	\end{equation}
\end{proposition}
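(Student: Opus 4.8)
The plan is to diagonalize the frame operator $S_{g,\L}$, $\L=\alpha\Z\times\beta\Z$, by means of the Zak transform and then to read off the sharp frame bounds as the essential infimum and supremum of the resulting multiplier. The one non-elementary ingredient is the \emph{Janssen representation}: the hypothesis $\sum_{k,l}|V_gg(\tfrac k\beta,\tfrac l\alpha)|<\infty$ is precisely the Tolimieri--Orr summability condition over the adjoint lattice $\L^\circ=\tfrac1\beta\Z\times\tfrac1\alpha\Z$, and it guarantees that $S_{g,\L}$ is bounded and equals
\begin{equation}
	S_{g,\L}=\frac1{\alpha\beta}\sum_{k,l\in\Z}V_gg\!\left(\tfrac k\beta,\tfrac l\alpha\right)\pi\!\left(\tfrac k\beta,\tfrac l\alpha\right),
\end{equation}
with absolute convergence in the operator norm; I would quote this from \cite{TolOrr95} as a black box.

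The hypothesis $\delta=(\alpha\beta)^{-1}\in\N$ enters through the observation that on $\L^\circ$ the standard symplectic form takes only integer values, so that the shifts $\{\pi(\mu):\mu\in\L^\circ\}$ commute \emph{exactly} (all commutation phases are $e^{2\pi i\delta(kl'-k'l)}=1$); a commuting family of unitaries is simultaneously diagonalizable, and this is precisely why the answer is a scalar symbol rather than the $\delta\times\delta$ matrix-valued symbol that governs general rational oversampling. The simultaneous diagonalization is carried out by the Zak transform $Z_\alpha\colon\Lt[]\to L^2(Q)$ with fundamental domain $Q=[0,\alpha)\times[0,1/\alpha)$, which (for the appropriate normalization) intertwines $T_\alpha$ with multiplication by $e^{-2\pi i\alpha\omega}$ and $M_{1/\alpha}$ with multiplication by $e^{2\pi ix/\alpha}$; since $\tfrac1\beta=\delta\alpha\in\alpha\Z$ and $\tfrac1\alpha$ is the frequency-period of $Z_\alpha$, it follows that each generator $\pi(\tfrac k\beta,\tfrac l\alpha)$ is carried to multiplication by the unimodular function $e^{2\pi i(lx/\alpha-\delta k\alpha\omega)}$. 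Conjugating the Janssen representation then expresses $S_{g,\L}$ as multiplication by
\begin{equation}
	\Phi(x,\omega)=\frac1{\alpha\beta}\sum_{k,l\in\Z}V_gg\!\left(\tfrac k\beta,\tfrac l\alpha\right)e^{2\pi i(lx/\alpha-\delta k\alpha\omega)},
\end{equation}
and a rescaling of the two variables --- under which $Q$ is traversed an integer number ($\delta$) of times over the unit square --- carries $\Phi$ to the function $H(x,\omega)=(\alpha\beta)^{-1}\sum_{k,l}V_gg(\tfrac k\beta,\tfrac l\alpha)e^{2\pi i(k\omega+lx)}$ of the statement. By hypothesis this series converges absolutely and uniformly, so $H$ is continuous and $1$-periodic in each variable, and it is real-valued because $V_gg(-\mu)=\overline{V_gg(\mu)}$, in accordance with $S_{g,\L}=S_{g,\L}^{*}$.

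It then remains to note that a multiplication operator $M_H$ on an $L^2$-space satisfies $\essinf H\cdot\|\phi\|^2\le\langle M_H\phi,\phi\rangle\le\esssup H\cdot\|\phi\|^2$ with both constants sharp; since $\sum_{\l\in\L}|\langle f,\pi(\l)g\rangle|^2=\langle S_{g,\L}f,f\rangle$ and a unitary equivalence preserves the frame inequality \eqref{eq_frame}, the sharp frame bounds of $\G(g,\L)$ equal $\essinf_{(x,\omega)}H$ and $\esssup_{(x,\omega)}H$, which by the periodicity of $H$ may equally be taken over a single period or over all of $\R^2$. I expect the one genuinely delicate step to be the justification of the Janssen representation from the bare $\ell^1$-summability of the adjoint-lattice samples of $V_gg$ (the content of the Tolimieri--Orr theorem); everything else is bookkeeping --- chiefly tracking the phase factors through the Zak transform so that the multiplier matches $H$ on the nose, and checking that the rescaling covers the unit square an integer number of times, so that the essential extrema over $Q$ and over $\R^2$ agree.
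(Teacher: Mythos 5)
The paper itself does not prove this proposition: it is imported verbatim as a known result from Janssen \cite{Jan96}, so there is no internal proof to compare against. Your argument is a correct reconstruction of the standard (essentially Janssen's original) proof: the Janssen representation of the frame operator over the adjoint lattice $\Lambda^\circ=\tfrac1\beta\Z\times\tfrac1\alpha\Z$, valid under the stated $\ell^1$-condition by the Tolimieri--Orr theorem \cite{TolOrr95} (which you rightly isolate as the one non-elementary input and quote as a black box), followed by simultaneous diagonalization of the commuting shifts $\pi(\mu)$, $\mu\in\Lambda^\circ$, via the Zak transform $Z_\alpha$. The integrality of $\delta$ enters exactly where you say it does: it trivializes the commutation phases and makes $\tfrac1\beta=\delta\alpha$ an integer multiple of $\alpha$, so that every generator becomes a unimodular multiplier on the Zak domain. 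One small point worth recording: the symmetry $V_gg(-\mu)=\overline{V_gg(\mu)}$ holds in general only up to the phase $e^{-2\pi i x\omega}$, but at the points $\mu=\left(\tfrac k\beta,\tfrac l\alpha\right)$ this phase equals $e^{-2\pi i kl\delta}=1$ precisely because $\delta\in\N$, so your conclusion that the symbol $H$ is real-valued stands. Granting the Tolimieri--Orr input, the remaining steps --- the multiplier computation, the rescaling under which the Zak domain covers the unit square $\delta$ times so that the essential extrema over the two domains agree, and the identification of the sharp frame bounds of a real multiplication operator with the essential extrema of its symbol --- are all sound.
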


Using Janssen's result, we can quickly show that an even function belonging to $S_0(\R)$ cannot constitute a Gabor frame with a lattice of critical density.
\begin{proposition}
	Let $g \in S_0(\R)$ be an even function and consider the Gabor system $\G(g, \L)$ with $\L = S \Z^2$, i.e., $\delta = 1$. Then the lower frame bound vanishes and, hence, the Gabor system is not a frame. 
\end{proposition}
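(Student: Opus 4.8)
The plan is to transfer the assertion to the square lattice by a metaplectic deformation, read off the lower frame bound from Janssen's representation, and exhibit a zero of that symbol coming from a symmetry of the Zak transform. Since $\delta = 1$, the lattice is $\L = S \Z^2$ with $S \in Sp(1)$; by Theorem \ref{thm_deform} the systems $\G(g,\L)$ and $\G(\widehat{S}^{-1}g, \Z^2)$ have the same sharp frame bounds, and by Proposition \ref{pro_parity} the window $h := \widehat{S}^{-1}g$ lies in $S_0(\R)$ and is again even. (This is exactly the mechanism of Theorem \ref{thm_main}, which goes through unchanged at $\delta = 1$.) Hence it suffices to show that $\G(h, \Z^2)$ is not a frame for every even $h \in S_0(\R)$; as the upper bound is automatic for $S_0$ windows, this means showing that the lower bound vanishes.

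Next I would apply Proposition \ref{pro_Janssen} with $\alpha = \beta = 1$, so that $\delta = (\alpha\beta)^{-1} = 1 \in \N$ and the summability hypothesis $\sum_{k,l} |V_h h(k,l)| < \infty$ is guaranteed by $h \in S_0(\R)$ (Tolimieri--Orr). This yields
\begin{equation}
	A = \essinf_{(x,\omega) \in \R^2} G(x,\omega), \qquad G(x,\omega) := \sum_{k,l \in \Z} V_h h(k,l)\, e^{2 \pi i (k \omega + l x)} .
\end{equation}
Summing first over $k$ and then applying Poisson summation in $l$ identifies $G$ with $|Zh|^2$ up to a harmless reflection of the argument, where $Zh(x,\omega) = \sum_{k \in \Z} h(x+k) e^{-2 \pi i k \omega}$ is the Zak transform; since $h \in S_0(\R)$ the series converges absolutely and $Zh$ is continuous, so $G$ is continuous, $\Z^2$-periodic and nonnegative, and $A = \min_{(x,\omega)} |Zh(x,\omega)|^2$.

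It then remains to produce a zero of $Zh$. The Zak transform obeys the quasi-periodicity relations $Zh(x+1,\omega) = e^{2 \pi i \omega} Zh(x,\omega)$ and $Zh(x,\omega+1) = Zh(x,\omega)$, and since $h$ is even it also satisfies $Zh(-x,-\omega) = Zh(x,\omega)$. Chaining these at the point $(\tfrac{1}{2},\tfrac{1}{2})$ gives $Zh(\tfrac{1}{2},\tfrac{1}{2}) = Zh(-\tfrac{1}{2},-\tfrac{1}{2}) = Zh(-\tfrac{1}{2},\tfrac{1}{2}) = -Zh(\tfrac{1}{2},\tfrac{1}{2})$, hence $Zh(\tfrac{1}{2},\tfrac{1}{2}) = 0$ (equivalently, in the defining series the term $k$ cancels the term $-1-k$). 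Therefore $A = 0$, so $\G(h,\Z^2)$ --- and with it $\G(g,\L)$ --- is not a frame.

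I expect the only real effort to be bookkeeping: fixing the normalization in the identity between $G$ and $|Zh|^2$, and checking the (routine) convergence, continuity and Poisson-summation statements for $S_0$ windows. The one substantive ingredient is the parity-forced vanishing of $Zh$ at $(\tfrac{1}{2},\tfrac{1}{2})$; alternatively one could invoke the general fact that a continuous function satisfying the Zak quasi-periodicity must vanish somewhere (a winding-number argument), but evenness makes the location of the zero explicit and avoids the topological detour.
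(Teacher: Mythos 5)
Your argument is correct, and your reduction to the square lattice via Theorem \ref{thm_deform} and Proposition \ref{pro_parity} is exactly the paper's mechanism (the paper phrases it as an application of Theorem \ref{thm_main}). Where you genuinely diverge is in showing that the lower bound for $\G(h,\Z^2)$ vanishes. The paper stays entirely on the Janssen side: it evaluates the symbol at $(x,\omega)=(\tfrac12,\tfrac12)$, rewrites the resulting signed sum $\sum (-1)^{k+kl+l}A_gg(k,l)$ as $\sum 2A_gg(2k,2l)-\sum A_gg(k,l)$, identifies $2A_gg(2k,2l)$ with $W_gg(k,l)$ using $g^\vee=g$, and kills the difference with the symplectic Poisson summation identity $\sum W_gg(k,l)=\sum A_gg(k,l)$. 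You instead pass through the Zak transform: Poisson summation turns the Janssen symbol into $|Zh|^2$ up to a reflection of the argument, and the zero at $(\tfrac12,\tfrac12)$ is forced by quasi-periodicity together with evenness. The two computations locate the same zero; yours is the more classical route --- it is the standard critical-density picture, it makes the nonnegativity and continuity of the symbol transparent (so $A=\min|Zh|^2=0$ is immediate rather than an essential infimum bounded above by a single sample), and the winding-number fallback you mention shows it is robust beyond parity assumptions. The paper's ambiguity/Wigner bookkeeping buys something else: a variant of the same index-splitting carries over to the odd case in Proposition \ref{pro_odd}, where it simultaneously disposes of densities $1$ and $2$, which the Zak transform on the square lattice does not give for free. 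One small point in your favour: Theorem \ref{thm_main} is stated only for $\delta>1$, so invoking Theorem \ref{thm_deform} and Proposition \ref{pro_parity} directly, as you do, is the cleaner way to cover $\delta=1$.
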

\begin{proof}
	Consider the lattice $\Z^2$ ($\alpha = \beta = 1$) and in the series in Proposition \ref{pro_Janssen} set $(x, \omega) = \left(\tfrac{1}{2}, \tfrac{1}{2} \right)$. We set
	\begin{equation}
		\widetilde{A} = \sum_{k,l \in \Z} V_g g (k,l) e^{\pi i (k+l)} = \sum_{k,l \in \Z} e^{-\pi i k l} A_g g (k,l) e^{\pi i (k+l)} = \sum_{k,l \in \Z} (-1)^{k+kl+l} A_g g (k,l).
	\end{equation}
	Now, we split the last expression above in the part where the sign is positive and where it is negative. We see that $(-1)^{k+kl+l}$ is positive only if both $k$ and $l$ are even. So, we also get the value $\widetilde{A}$ by summing over the even integers twice and then subtracting the sum over all integers;
	\begin{equation}
		\widetilde{A} = \sum_{k,l \in \Z} (-1)^{k+kl+l} A_g g (k,l) = \sum_{k,l \in \Z} 2 A_g g (2k,2l) - \sum_{k,l \in \Z} A_g g (k,l).
	\end{equation}
	By the algebraic relation of the Wigner and ambiguity function and due to the fact that $g^\vee = g$, we have $2A_g g(2k,2l) = W_g g(k,l)$, which gives
	\begin{equation}
		\widetilde{A} = \sum_{k,l \in \Z} W_g g (k,l) - \sum_{k,l \in \Z} A_g g (k,l).
	\end{equation}
	By using a version of the Poisson summation involving the symplectic Fourier transform (see also \cite{Faulhuber_Note_2018}) we get that
	\begin{equation}
		\sum_{k,l \in \Z} W_g g (k,l) = \sum_{k,l \in \Z} A_g g (k,l).
	\end{equation}
	Therefore, it follows that $\widetilde{A} = 0$, which then has to be the sharp lower frame bound. This shows that the lower frame bound of the Gabor system $\G(g, \Z^2)$ indeed vanishes. By using Theorem \ref{thm_main}, it follows that the Gabor system $\G(g, \L)$, $\L = S \Z^2$, is not a frame for any lattice of critical density.
\end{proof}
As remarked in the introduction, due to the results in \cite{Lyu92}, \cite{Sei92} and \cite{SeiWal92} on Gaussian windows, this is the only example of a fixed density where we can completely exclude all lattices of a given density from the frame set of all even functions from $S_0(\R)$.

In a similar way, we can establish the following result.
\begin{proposition}\label{pro_odd}
	Let $g \in S_0(\R)$ be an odd function and consider the Gabor system $\G(g, \L)$ with $\L = S \Z^2$, i.e., $\delta = 1$. Then the lower frame bound vanishes and, hence, the Gabor system is not a frame. 
\end{proposition}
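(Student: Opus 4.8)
I would follow the proof given above for even windows almost verbatim, the only change being the point of the fundamental domain at which Janssen's frame-bound function is evaluated. Since $g \in S_0(\R)$, both $(g(k))_{k \in \Z}$ and $(V_g g(k,l))_{k,l \in \Z}$ are absolutely summable, so Proposition \ref{pro_Janssen} applies to $\G(g,\Z^2)$ with $\alpha = \beta = 1$, and the frame-bound function
\[
	(x,\omega) \longmapsto \sum_{k,l \in \Z} V_g g(k,l)\, e^{2\pi i (k\omega + l x)}
\]
is a continuous, $\Z^2$-periodic, nonnegative function whose infimum equals the sharp lower frame bound of $\G(g,\Z^2)$. Hence it suffices to exhibit a single point at which this function vanishes.

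Evaluating at $(x,\omega) = (0,0)$ gives the value $\widetilde A = \sum_{k,l \in \Z} V_g g(k,l)$, and the crux of the proof is the identity
\[
	\sum_{k,l \in \Z} V_g g(k,l) = \Bigl| \sum_{k \in \Z} g(k) \Bigr|^2,
\]
valid for every $g \in S_0(\R)$. I would derive it by writing $V_g g(k,l) = \int_\R g(t)\,\overline{g(t-k)}\, e^{-2\pi i l t}\, dt$, summing over $l$ by the Poisson summation formula — which holds pointwise here since $t \mapsto g(t)\overline{g(t-k)}$ again lies in $S_0(\R)$ — to obtain $\sum_{l \in \Z} V_g g(k,l) = \sum_{m \in \Z} g(m)\,\overline{g(m-k)}$, and then summing over $k$: the double series converges absolutely by the summability noted above, so Fubini's theorem lets me factor it as $\bigl(\sum_{m \in \Z} g(m)\bigr)\,\overline{\bigl(\sum_{j \in \Z} g(j)\bigr)}$. (Equivalently, $\widetilde A = |Zg(0,0)|^2$, the squared modulus of the Zak transform of $g$ at the origin.) Since $g$ is odd, $g(0) = 0$ and the contributions of $m$ and $-m$ cancel in pairs, so $\sum_{k \in \Z} g(k) = 0$ and hence $\widetilde A = 0$.

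Therefore the sharp lower frame bound of $\G(g,\Z^2)$ is $0$, so $\G(g,\Z^2)$ is not a frame; and by Proposition \ref{pro_parity} together with Theorem \ref{thm_main} the Gabor system $\G(g,\L)$ with $\L = S\Z^2$ is then not a frame for any $S \in Sp(1) = SL(\R,2)$, i.e.\ for any lattice of critical density. I do not expect a genuine obstacle: the whole content is the choice of evaluation point — the origin, in place of the point $(\tfrac12,\tfrac12)$ used for even windows — and the elementary Poisson-summation identity above, which an odd window annihilates. Incidentally, the same computation carried out at $(\tfrac12,\tfrac12)$ reproduces the even case.
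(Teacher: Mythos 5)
Your argument is correct, and every step is properly justified: the absolute summability of $(g(k))_k$ and of $(V_gg(k,l))_{k,l}$ for $g\in S_0(\R)$ licenses both the application of Proposition \ref{pro_Janssen} and the two interchanges of summation, the pointwise Poisson summation for $t\mapsto g(t)\overline{g(t-k)}\in S_0(\R)$ is legitimate, and the factorization $\sum_{k,l}V_gg(k,l)=\bigl|\sum_k g(k)\bigr|^2$ checks out. However, you reach $\widetilde A=0$ by a genuinely different route from the paper. The paper stays entirely inside the ambiguity/Wigner formalism: it rewrites $\widetilde A=\sum_{k,l}(-1)^{kl}A_gg(k,l)$, splits the sum according to the parities of $k$ and $l$, uses the relation $2A_gg(2k,2l)=-W_gg(k,l)$ (valid since $g^\vee=-g$) together with the symplectic Poisson summation formula to kill the even-even part, and then applies the same Poisson/Wigner trick once more to show $\sum_{k,l}2A_gg(2k,l)=0$ and $\sum_{k,l}2A_gg(k,2l)=0$. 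Your proof is shorter and more elementary -- ordinary Poisson summation in the frequency index plus Fubini -- and it is also more transparent about \emph{why} the value vanishes: $\widetilde A=|Zg(0,0)|^2$ is a square, which an odd window annihilates at the origin, exactly as in the classical Zak-transform proof of the density theorem at critical density; your observation that evaluating at $(\tfrac12,\tfrac12)$ recovers the even case the same way is a nice bonus. What the paper's more laborious splitting buys, and your computation does not, is the byproduct recorded in the remark after Proposition \ref{pro_odd}: the identity $\sum_{k,l}2A_gg(2k,l)=0$ is, again by Janssen's result, precisely the statement that $\G(g,\Z\times\tfrac12\Z)$ is not a frame, so the paper's proof simultaneously covers the density-$2$ case ($n=1$ of Theorem \ref{thm_LyuNes}), whereas yours establishes only the critical-density statement.
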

\begin{proof}
	Again, consider the lattice $\Z^2$ and the series in Proposition \ref{pro_Janssen}, but this time set $(x, \omega) = \left(0,0 \right)$. This leads to the expression
	\begin{equation}
		\widetilde{A} = \sum_{k,l \in \Z} V_g g (k,l) = \sum_{k,l \in \Z} e^{-\pi i k l} A_g g (k,l) = \sum_{k,l \in \Z} (-1)^{kl} A_g g (k,l).
	\end{equation}
	Now, we split the last expression above in the following way, which we will justify right away;
	\begin{align}
		\widetilde{A} & = \sum_{k,l \in \Z} (-1)^{kl} A_g g (k,l)\\
		& = \sum_{k,l \in \Z} 2 A_g g (2k,l) + \sum_{k,l \in \Z} 2 A_g g (k,2l) - \sum_{k,l \in \Z} 2 A_g g (2k,2l) - \sum_{k,l \in \Z} A_g g (k,l).
	\end{align}
	The first two sums do not count lattice points where both indices are odd. Points with one even and one odd index are counted twice by each of the first two sum. Points with two even indices are also counted twice by each of the first two sums, so in total they are counted 4 times. Then we subtract all even lattice points twice and finally subtract all lattice points once, which means that all lattice points with both indices being odd have a negative sign and all other points have a positive sign.
	
	This time, as $g^\vee = - g$, we note that
	\begin{equation}
		\sum_{k,l \in \Z} \underbrace{2 A_g g (2k,2l)}_{= - W_g g(k,l)} + \sum_{k,l \in \Z} A_g g (k,l) = 0.
	\end{equation}
	This leaves us with
	\begin{equation}
		\widetilde{A} = \sum_{k,l \in \Z} (-1)^{kl} A_g g (k,l) = \sum_{k,l \in \Z} 2 A_g g (2k,l) + \sum_{k,l \in \Z} 2 A_g g (k,2l)
	\end{equation}
	By using the (symplectic version of) the Poisson summation formula first and the algebraic relation between the Wigner transform and the ambiguity function afterwards, we get
	\begin{equation}
		\sum_{k,l \in \Z} 2 A_g g (2k,l) = \sum_{k,l \in \Z} W_g g \left(k, \tfrac{l}{2} \right) = -\sum_{k,l \in \Z} 2 A_g g (2k,l) = 0.
	\end{equation}
	With the same trick we also see that
	\begin{equation}
		\sum_{k,l \in \Z} 2 A_g g (k,2l) = 0.
	\end{equation}
	Hence, $\widetilde{A} = 0$, which means that the lower frame bound of the Gabor system $\G(g,\Z \times \Z)$ vanishes. Now, use Theorem \ref{thm_main} to establish the general result.
\end{proof}
We note that splitting the sum as we did in the proof of Proposition \ref{pro_odd}, already gives a proof that for an odd window in $S_0(\R)$ we cannot have a Gabor frame for a lattice of density 2, as showing that
\begin{equation}
	\sum_{k,l \in \Z} 2 A_g g (2k,l) = 0
\end{equation}
is, by Janssen's result \ref{pro_Janssen}, equivalent to showing that the Gabor system $\G(g, \Z \times \tfrac{1}{2} \Z)$ is not a frame. Interestingly, the proof of Proposition \ref{pro_odd} already covers the cases $n = 1$ (density 2) and $n = \infty$ (density 1) in Theorem \ref{thm_LyuNes}.

Of course, we know that the Balian-Low theorem holds for any $g \in S_0(\R)$ (even for general index sets), but it is due to the parity of the window that we could find the 0 of the series
\begin{equation}
	\sum_{k,l \in \Z} V_g g \left( \tfrac{k}{\beta}, \tfrac{l}{\alpha} \right) e^{2 \pi i (k \omega + l x)}
\end{equation}
by a ``lucky guess". Therefore, this simple proof may probably not work for a generic window in $S_0(\R)$.

\section{Open Problems}\label{sec_problems}
Before closing this work, we define the following (probably incomplete) list of frame sets, which extends the list given in \cite{Gro14}. We note that the lattice density is now hidden in the notation used for lattices. For a fixed dimension $d \in \N$, we define:
\begin{itemize}
	\item[-] The $\alpha$-frame set
	\begin{equation}
		\mathfrak{F}_\alpha (g) = \lbrace \alpha \Z^{2d} \mid \G(g, \alpha \Z^{2d}) \textnormal{ is a frame.} \rbrace.
	\end{equation}
	\item[-] The separable or $(\alpha,\beta)$-frame set
	\begin{equation}
		\mathfrak{F}_{(\alpha, \beta)} (g) = \lbrace \alpha \Z^d \times \beta \Z^d \mid \G(g, \alpha \Z^d \times \beta \Z^d) \textnormal{ is a frame.} \rbrace.
	\end{equation}
	For $d > 1$ we allow $\alpha$ and $\beta$ to be multi-indices and $\alpha \Z^d = \alpha_1 \Z \times \ldots \times \alpha_d \Z$ (same for $\beta$). In \cite{Gro14} this set was defined for $d =1 $ and called the reduced frame set.
	\item[-] The symplectic or $\sigma$-frame set
	\begin{equation}
		\mathfrak{F}_{\sigma} (g) = \lbrace \L_\sigma \subset \R^{2d} \textnormal{ symplectic lattice} \mid \G(g, \L_\sigma) \textnormal{ is a frame.} \rbrace.
	\end{equation}
	A symplectic lattice is a lattice that can be generated by a symplectic matrix $S \in Sp(d)$;
	$\L_\sigma = \delta^{-1/2d} S \Z^{2d}$, $\delta > 0$.
	\item[-] The lattice or $\L$-frame set
	\begin{equation}
		\mathfrak{F}_{\L} (g) = \lbrace \L \subset \R^{2d} \textnormal{ lattice} \mid \G(g, \L) \textnormal{ is a frame.} \rbrace.
	\end{equation}
	In \cite{Gro14} this set was defined for $d = 1$ and called the full frame set.
	\item[-] The frame set
	\begin{equation}
		\mathfrak{F} (g) = \lbrace \Gamma \subset \R^{2d} \mid \G(g, \Gamma) \textnormal{ is a frame.} \rbrace.
	\end{equation}
\end{itemize}
We have the following chain of inclusions for the defined frame sets;
\begin{equation}
	\mathfrak{F}_\alpha \subset \mathfrak{F}_{(\alpha, \beta)} \subset \mathfrak{F}_\sigma \subset \mathfrak{F}_{\L} \subset \mathfrak{F}
\end{equation}
For $d = 1$, we have that $\mathfrak{F}_\sigma = \mathfrak{F}_\L$. By using Theorem \ref{thm_main} we were able to generalize the result of Lyubarskii and Nes from $\mathfrak{F}_{\alpha}$ directly to $\mathfrak{F}_\sigma = \mathfrak{F}_\L$ (although the result was already established for $\mathfrak{F}_{(\alpha, \beta)}$ we only needed it for $\mathfrak{F}_\alpha$).

This insight leads to the following question: Does the result in Corollary \ref{cor_LyuNes} depend on the group structure of $\L$ or does the result hold for any relatively separated, discrete set $\Gamma \subset \R^2$ with lower Beurling density $\tfrac{n+1}{n}$? At the moment, the author has no clue in either direction and, therefore, considers the question as an interesting open problem.

Also, for $d > 1$, the results in \cite{Faulhuber_Note_2018} show that we cannot have a Gabor frame consisting of an odd window $g \in S_0(\Rd)$ and a symplectic lattice of denisty $2^d$. Is it possible to extend the result to arbitrary lattices or more general point sets of lower Beurling density $2^d$? Lastly, the following question comes up; how generic is the result of Lyubarskii and Nes truly? Does a higher dimensional result exist for densities $\left( \tfrac{n+1}{n} \right)^d$?

As a last remark, we mention that, by finding appropriate properties defining a class of windows invariant under the action of the metaplectic group, versions of Theorem \ref{thm_main} seem to be particularly helpful to completely exclude lattices of a certain densities from the frame set of a whole class of functions.

\begin{appendix}
	\section{Properties of the Symplectic and the Metaplectic Group}\label{app}
	For background information and motivation of the following definitions as well as for the proofs of the results in this section we refer to the textbook of de Gosson \cite{Gos11}. We start with the definition of a symplectic matrix.
	\begin{definition}[Symplectic Matrix]\label{def_symplectic_matrix}
		A matrix $S \in GL(2d,\R)$ is called {symplectic} if and only if
		\begin{equation}\label{eq_symplectic_matrix}
			S J S^T = S^T J S = J.
		\end{equation}
	\end{definition}
	An equivalent definition involves the so-called symplectic form, which plays a central role in symplectic geometry and Hamiltonian mechanics. For $z = (x,\omega), \, z' =(x',\omega') \in \R^{2d}$, the symplectic form $\sigma$ is given by
	\begin{equation}
		\sigma(z,z') = x \cdot \omega' - x' \cdot \omega,
	\end{equation}
	where the dot $\cdot$ denotes the Euclidean inner product in $\Rd$. A matrix $S$ is symplectic if and only if
	\begin{equation}
		\sigma(z,z') = \sigma(Sz,Sz').
	\end{equation}		
	This definition is equivalent to Definition \ref{def_symplectic_matrix}. In symplectic geometry, symplectic matrices take a role similar to that of orthogonal matrices in Euclidean geometry;
	\begin{align}
		M \text{ is orthogonal } & \Longleftrightarrow \; M^T I M = I\\
		S \text{ is symplectic } & \Longleftrightarrow \; S^T J S = J.
	\end{align}
	We refer to \cite{Gos13} for an enjoyable introduction to symplectic matrices and their use in Hamiltonian mechanics.
		
	It is not hard to show that symplectic matrices actually form a group under matrix multiplication. Also, if $S \in Sp(d)$, then $\det(S) = 1$. The following decomposition property of symplectic matrices was implicitly used in this work.
	\begin{proposition}\label{pro_symplectic_factorization}
		Let $S = \Sblock$ be a symplectic matrix with $d \times d$ blocks and the property that $\det(B) \neq 0$. Then, $S$ can be factored as
		\begin{equation}\label{eq_factor_symplectic_matrix_1}
			S = V_{{DB}^{-1}}M_{B^{-1}}JV_{B^{-1}A}.
		\end{equation}
	\end{proposition}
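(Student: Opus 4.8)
The plan is to verify the factorization \eqref{eq_factor_symplectic_matrix_1} by direct computation, using the explicit forms of the generator matrices from Definition~\ref{def_gen_matrix}. First I would write out the right-hand side $V_{DB^{-1}}M_{B^{-1}}JV_{B^{-1}A}$ as a product of $2d \times 2d$ block matrices: $V_{B^{-1}A}$ is lower-triangular with blocks $I, 0$ in the top row and $B^{-1}A, I$ in the bottom row; $J$ swaps the two block coordinates with the appropriate sign; $M_{B^{-1}}$ is block-diagonal with entries $B$ (since $(B^{-1})^{-1} = B$) and $B^{-T}$; and $V_{DB^{-1}}$ is lower-triangular with bottom-left block $DB^{-1}$. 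Multiplying these four matrices out in order should yield exactly $\Sblock$. I expect the middle product $M_{B^{-1}}J V_{B^{-1}A}$ to collapse nicely, and then left-multiplication by $V_{DB^{-1}}$ to fill in the $C$ block.

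The key steps, in order, are: (i) compute $J V_{B^{-1}A}$, which sends the block columns around and produces a matrix whose blocks involve $B^{-1}A$; (ii) left-multiply by $M_{B^{-1}}$, scaling the top block row by $B$ and the bottom block row by $B^{-T}$, at which point the top-right block becomes $B$ and the top-left becomes $A$ (after $B \cdot B^{-1}A = A$); (iii) left-multiply by $V_{DB^{-1}}$, which adds $DB^{-1}$ times the top block row to the bottom block row, producing bottom blocks $C$ and $D$; (iv) read off that the bottom-left block is then $DB^{-1}A - B^{-T}$, and here one must invoke the hypothesis that $S$ is symplectic to identify this with $C$.

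The main obstacle is step~(iv): the naive block multiplication will not immediately give the block $C$ in the bottom-left corner, but rather an expression like $DB^{-1}A - B^{-T}$ (up to signs depending on conventions for $J$), and one has to show this equals $C$. This is precisely where the symplectic relations come in. Writing out $S^T J S = J$ in block form gives the identities $A^T C = C^T A$, $B^T D = D^T B$, and $A^T D - C^T B = I$; equivalently, from $S J S^T = J$ one gets $AB^T = BA^T$, $CD^T = DC^T$, and $AD^T - BC^T = I$. From $AD^T - BC^T = I$ one solves $C^T = B^{-1}(AD^T - I)$, and transposing together with the symmetry relation $CD^T = DC^T$ (or equivalently manipulating $A^TD - C^TB = I$) yields $C = DB^{-1}A - B^{-T}$, matching the bottom-left block of the product. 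So the real content is bookkeeping with the symplectic identities; the rest is routine block-matrix algebra. I would also remark that $V_{DB^{-1}}$ and $V_{B^{-1}A}$ are genuine shearing matrices because $DB^{-1}$ and $B^{-1}A$ are symmetric — itself a consequence of the symplectic relations $CD^T = DC^T$, $AB^T = BA^T$ — and that $M_{B^{-1}}$ requires only $\det(B) \neq 0$, which is the standing hypothesis.
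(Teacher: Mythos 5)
Your proposal is correct and complete: the block computation
\begin{equation}
	V_{DB^{-1}}\,M_{B^{-1}}\,J\,V_{B^{-1}A}
	= \left(\begin{array}{cc} I & 0\\ DB^{-1} & I \end{array}\right)
	\left(\begin{array}{cc} A & B\\ -B^{-T} & 0 \end{array}\right)
	= \left(\begin{array}{cc} A & B\\ DB^{-1}A - B^{-T} & D \end{array}\right)
\end{equation}
checks out, and the identification of the lower-left block with $C$ via $AD^T - BC^T = I$ is exactly the point where the symplectic hypothesis enters. Note that the paper itself gives no proof of this proposition --- it defers all proofs in the appendix to de Gosson's textbook --- so your direct verification is the standard argument and fills in what the paper only cites. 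One small bookkeeping correction: the symmetry of $DB^{-1}$ follows from $B^TD = D^TB$ (one of the relations coming from $S^TJS = J$), not from $CD^T = DC^T$ as your final parenthetical suggests; likewise, converting $DA^TB^{-T}$ into $DB^{-1}A$ uses $AB^T = BA^T$. Since both identities appear in your list of symplectic relations, the argument goes through as written once these attributions are fixed.
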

	Such a matrix is called a free symplectic matrix. The following theorem now shows that any symplectic matrix can be decomposed into the generator matrices from Section \ref{sec_symp_meta}.
	\begin{theorem}
		Any matrix $S \in Sp(d)$ is the (non-unique) product of exactly two free symplectic matrices $S_1$ and $S_2$.
	\end{theorem}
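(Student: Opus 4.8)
The plan is to reduce the entire statement to a single transversality fact about Lagrangian subspaces. Write $S=\left(\begin{smallmatrix}A&B\\C&D\end{smallmatrix}\right)$ with $d\times d$ blocks. I claim it suffices to produce \emph{one} symmetric matrix $P$ with $\det(A+BP)\neq0$. Indeed, set $T:=V_PJ$; a one-line computation gives $T=\left(\begin{smallmatrix}0&I\\-I&P\end{smallmatrix}\right)$, so $T$ is free (upper-right block $I$), and
\[
 ST=\begin{pmatrix}-B & A+BP\\ -D & C+DP\end{pmatrix}
\]
is free as well since $A+BP$ is invertible. Because the inverse of the symplectic matrix $\left(\begin{smallmatrix}A'&B'\\C'&D'\end{smallmatrix}\right)$ equals $\left(\begin{smallmatrix}D'^{T}&-B'^{T}\\ -C'^{T}&A'^{T}\end{smallmatrix}\right)$, the inverse of a free symplectic matrix is again free, so $S=(ST)\,T^{-1}$ exhibits $S$ as a product of two free symplectic matrices. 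Non-uniqueness is then immediate, since the admissible $P$ form a non-empty open set and distinct choices of $P$ give distinct factorizations.

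Next I would recast the requirement ``$\exists\,P=P^{T}$ with $\det(A+BP)\neq0$'' geometrically. Since $\left(\begin{smallmatrix}A&B\end{smallmatrix}\right)=\left(\begin{smallmatrix}I&0\end{smallmatrix}\right)S$ and $S$ is invertible, the kernel $K:=\ker\left(\begin{smallmatrix}A&B\end{smallmatrix}\right)=S^{-1}\bigl(\{0\}\times\R^{d}\bigr)$ has dimension $d$ and, being the image of the Lagrangian plane $\{0\}\times\R^{d}$ under the symplectic map $S^{-1}$, is itself a Lagrangian subspace of $\R^{2d}$. For symmetric $P$ the graph $\ell_{P}:=\{(x,Px):x\in\R^{d}\}$ is the image of $\left(\begin{smallmatrix}I\\P\end{smallmatrix}\right)$, and $A+BP=\left(\begin{smallmatrix}A&B\end{smallmatrix}\right)\left(\begin{smallmatrix}I\\P\end{smallmatrix}\right)$ is invertible exactly when $\ell_{P}$ is a complement of $K$, i.e.\ $\ell_{P}\cap K=\{0\}$. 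So the theorem is equivalent to: \emph{every Lagrangian subspace $K\subset\R^{2d}$ admits a complement of the form $\ell_{P}$ with $P=P^{T}$}. This is the step I expect to be the real work, and it is also the only place where the dichotomy ``$S$ free vs.\ not free'' enters, since $K\cap(\{0\}\times\R^{d})=\{0\}$ holds precisely when $B$ is invertible.

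For that Lagrangian lemma the argument I have in mind is the following. Put $W:=\{w\in\R^{d}:(0,w)\in K\}$ and $V:=W^{\perp}$. The horizontal projection $K\to\R^{d}$, $(u,v)\mapsto u$, has kernel $\cong W$, so its image has dimension $\dim V$; and isotropy of $K$ gives $u\cdot w=\sigma\bigl((u,v),(0,w)\bigr)=0$ for $(u,v)\in K$, $w\in W$, so this image is contained in $W^{\perp}=V$ and hence equals $V$. Thus for each $x\in V$ there is $(x,v)\in K$, and the $V$-component $Gx$ of $v$ is well defined (the fibre over $x$ being a coset of $W\perp V$); isotropy again yields $x\cdot Gx'=x'\cdot Gx$, so $G\colon V\to V$ is symmetric and $K=\{(x,Gx+w):x\in V,\ w\in W\}$. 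Now let $P$ be $G+\mathrm{id}$ on $V$ and $0$ on $W$; written in the orthogonal splitting $\R^{d}=V\oplus W$ it is block-diagonal with symmetric blocks, so $P=P^{T}$. If $(x,Px)\in K$ then $x\in V$, hence $Px=Gx+x$, while membership in $K$ forces $Px\in Gx+W$; thus $x\in V\cap W=\{0\}$, so $\ell_{P}\cap K=\{0\}$, as needed.

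In the write-up I would also spell out the elementary facts used in passing — that products and inverses of symplectic matrices are symplectic (so $T$ and $T^{-1}$ are, being built from $V_{P}$ and $J$), and that $\left(\begin{smallmatrix}A&B\end{smallmatrix}\right)$ has full row rank when $S$ is symplectic — none of which is hard. A shorter but less constructive route to the Lagrangian lemma is to note that the set of Lagrangians transverse to a fixed Lagrangian is open and dense in the connected Lagrangian Grassmannian, hence meets the (also open and dense) chart of graph-type Lagrangians; I prefer the explicit construction above because it exhibits $P$ directly.
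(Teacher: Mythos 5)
Your argument is correct. Note that the paper itself gives no proof of this theorem: it is quoted in the appendix with a blanket reference to de Gosson's textbook, so there is no in-paper argument to compare against. Judged on its own, your reduction is clean and complete: $T=V_PJ=\left(\begin{smallmatrix}0&I\\-I&P\end{smallmatrix}\right)$ is free, $ST=\left(\begin{smallmatrix}-B&A+BP\\-D&C+DP\end{smallmatrix}\right)$ is free precisely when $A+BP$ is invertible, the inverse formula $\left(\begin{smallmatrix}A'&B'\\C'&D'\end{smallmatrix}\right)^{-1}=\left(\begin{smallmatrix}D'^{T}&-B'^{T}\\-C'^{T}&A'^{T}\end{smallmatrix}\right)$ shows inverses of free symplectic matrices are free, and $S=(ST)T^{-1}$ closes the loop; the openness of the admissible set of $P$ gives non-uniqueness. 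The real content, as you correctly identify, is the existence of a symmetric $P$ with $\det(A+BP)\neq0$, i.e.\ that the Lagrangian $K=S^{-1}(\{0\}\times\Rd)$ admits a graph-type Lagrangian complement $\ell_P$; your explicit construction via $W=\{w:(0,w)\in K\}$, $V=W^{\perp}$, the symmetric map $G$ on $V$, and $P=(G+\mathrm{id})\oplus 0$ is verified correctly (isotropy gives both the containment of the horizontal image in $V$ and the symmetry of $G$, and the final intersection computation forces $x\in V\cap W=\{0\}$). This is essentially the standard argument found in the cited reference — reduce to a transversality statement in the Lagrangian Grassmannian and then either construct the complement by hand or invoke genericity — so you have in effect supplied the proof the paper outsources.
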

	
	We turn our attention to the metaplectic group. One way to define it, is to say that $Mp(d)$ is the connected two--fold cover of $Sp(d)$ or that the following sequence is exact;
	\begin{equation}
		0 \rightarrow \Z_2 \rightarrow Mp(d) \rightarrow Sp(d) \rightarrow 0.
	\end{equation}
	Hence, we have the following identification
	\begin{equation}
		Sp(d) \cong Mp(d)\slash \lbrace\pm I \rbrace.
	\end{equation}
	There is also another, less abstract way to define the metaplectic group. This approach makes use of the generating operators from Section \ref{sec_symp_meta}.
	\begin{definition}[Quadratic Fourier Transform]\label{definition_quadratic_FT}
		Let $S_W$ be the free symplectic matrix
		\begin{equation}
			S_W = \SW
		\end{equation}
		associated to the quadratic form $W(t,t') = \frac{1}{2} P t^2 - L t \cdot t' + \frac{1}{2} Q t'^2$, called the generating function of the (free) symplectic matrix $S_W$. The operator
		\begin{equation}\label{eq_quadratic_FT}
			\widehat{S}_{W,m} = \Vhat_P \Mhat_{L,m} \Jhat \Vhat_Q
		\end{equation}
		is called the quadratic Fourier transform associated to the free symplectic matrix $S_W$ ($m$ denotes the Maslov index from Definition \ref{def_gen_operator}).
	\end{definition}
	For $g \in S_0(\Rd)$ we have the explicit formula
	\begin{equation}\label{eq_quadratic_FT_formula}
		\widehat{S}_{W,m} \, g \left( t \right) = i^{m-\frac{d}{2}} \sqrt{|\det(L)|} \int_{\Rd}  g(t) \, e^{2 \pi i \, W\left(t,t'\right)} \, dt'.
	\end{equation}
	Just as in the case of the symplectic group, there is a (non--unique) way of factorizing a metaplectic operator into two quadratic Fourier transforms.
	\begin{theorem}\label{thm_factorization_meta}
		For every $\widehat{S} \in Mp(d)$ there exist two quadratic Fourier transforms $\widehat{S}_{W_1,m_1}$ and $\widehat{S}_{W_2,m_2}$ such that $\widehat{S} = \widehat{S}_{W_1,m_1} \widehat{S}_{W_2,m_2}$.
	\end{theorem}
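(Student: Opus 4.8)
The plan is to lift to $Mp(d)$ the analogous factorization of symplectic matrices, using the covering projection $\pi\colon Mp(d)\to Sp(d)$. Recall from the description of $Mp(d)$ as the connected two-fold cover of $Sp(d)$ that $\pi$ is a surjective group homomorphism with kernel $\{\pm I\}$, where $I$ now denotes the identity operator on $\Lt$, and that each quadratic Fourier transform satisfies $\pi\big(\widehat{S}_{W,m}\big) = S_W$ for every admissible Maslov index $m$. These facts are part of the standard construction of the metaplectic representation and are established in \cite{Gos11}.

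First I would fix $\widehat{S}\in Mp(d)$ and set $S:=\pi(\widehat{S})\in Sp(d)$. By the preceding theorem on the factorization of symplectic matrices, $S = S_{W_1}S_{W_2}$ for two free symplectic matrices $S_{W_1}, S_{W_2}$, whose generating functions $W_1,W_2$ are read off from the block structure via Proposition \ref{pro_symplectic_factorization}. Next I would pick quadratic Fourier transforms $\widehat{S}_{W_1,m_1}$ and $\widehat{S}_{W_2,m_2}$ lying over $S_{W_1}$ and $S_{W_2}$ (Definition \ref{definition_quadratic_FT} produces such operators, the Maslov indices being arbitrary at this stage) and form $\widehat{T}:=\widehat{S}_{W_1,m_1}\widehat{S}_{W_2,m_2}\in Mp(d)$. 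Since $\pi$ is a homomorphism, $\pi(\widehat{T}) = S_{W_1}S_{W_2} = S = \pi(\widehat{S})$.

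It remains to remove the residual sign. Because $\ker\pi = \{\pm I\}$, we have either $\widehat{S}=\widehat{T}$, in which case we are done, or $\widehat{S}=-\widehat{T}$. In the latter case I would absorb the minus sign into a Maslov index: replacing $m_1$ by $m_1+2$ multiplies $\widehat{S}_{W_1,m_1}$ by $i^{2}=-1$, as is visible from the global phase factor $i^{m-d/2}$ in \eqref{eq_quadratic_FT_formula}, so that $\widehat{S}_{W_1,m_1+2}\widehat{S}_{W_2,m_2} = -\widehat{T} = \widehat{S}$. In either case $\widehat{S}$ has been written as a product of two quadratic Fourier transforms.

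I expect the main obstacle to be a matter of invoking the right structural facts rather than of computation: one must know precisely that $\widehat{S}_{W,m}$ as defined in \eqref{eq_quadratic_FT} projects onto $S_W$ under $\pi$, and that the fibre $\pi^{-1}(S_W)$ consists exactly of $\widehat{S}_{W,m}$ and $-\widehat{S}_{W,m}$, with the sign realised by the shift $m\mapsto m+2$. Granting these (which hold by construction of $Mp(d)$, see \cite{Gos11}), the factorization follows immediately from the symplectic case. An alternative, more hands-on route would be to run the symplectic reduction argument directly on a word in the generator operators $\Jhat,\Mhat_{L,m},\Vhat_P$, using the operator identities covering the defining relations of $Sp(d)$; this works but essentially duplicates the proof of the symplectic factorization theorem at the level of operators.
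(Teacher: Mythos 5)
The paper does not actually prove this theorem: the entire appendix is stated without proofs, with an explicit pointer to de Gosson's book \cite{Gos11}, so there is no in-paper argument to compare yours against. Judged on its own, your lifting argument is the natural one, and the mechanical steps (projecting, factoring $\pi(\widehat{S})$ into two free symplectic matrices, lifting each factor, and killing the residual sign by the shift $m_1\mapsto m_1+2$, which multiplies $\widehat{S}_{W_1,m_1}$ by $i^2=-1$) are all correct.

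The genuine gap is a circularity relative to the way $Mp(d)$ is set up here and in \cite{Gos11}. In that development $Mp(d)$ is \emph{defined} as the subgroup of unitaries generated by the quadratic Fourier transforms, and the projection $\pi^{Mp}$ of Theorem \ref{thm_natural_projection} --- the very map you use, together with its surjectivity and the identification $\ker(\pi^{Mp})=\lbrace \pm I\rbrace$ --- is \emph{constructed} by declaring $\widehat{S}_{W_1,m_1}\widehat{S}_{W_2,m_2}\mapsto S_{W_1}S_{W_2}$ and then checking well-definedness on all of $Mp(d)$. That check is precisely where Theorem \ref{thm_factorization_meta} is needed: without knowing that an arbitrary word in the generators collapses to a product of two quadratic Fourier transforms, one cannot define $\pi^{Mp}$ on a general element, let alone compute its kernel. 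Note that the paper itself states Theorem \ref{thm_natural_projection} \emph{after} Theorem \ref{thm_factorization_meta}, reflecting this logical order. Your argument would be non-circular only if one takes the abstract double-cover description as the definition of $Mp(d)$ and independently establishes that the operator group generated by the $\widehat{S}_{W,m}$ realizes that cover with $\widehat{S}_{W,m}\mapsto S_W$; but that identification is usually itself proved via the factorization. The honest proof is the ``hands-on route'' you mention in your last sentence and then set aside: one shows by explicit operator identities among $\Jhat$, $\Mhat_{L,m}$, $\Vhat_P$ that a product of quadratic Fourier transforms $\widehat{S}_{W,m}\widehat{S}_{W',m'}$ whose underlying product $S_WS_{W'}$ is again free equals a single quadratic Fourier transform (up to the Maslov-index bookkeeping), and then runs the reduction of an arbitrary word in the generators in parallel with the symplectic reduction. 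That computation is the actual content of the theorem, and your proposal defers exactly that part to the structural facts it is meant to establish.
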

	There is a natural projection from the metaplectic group $Mp(d)$ onto the symplectic group $Sp(d)$, which we will denote by $\pi^{Mp}$. The following theorem lies at the heart of the so-called Hamiltonian deformation of the Gabor system $\G(g,\L)$ as used in Theorem \ref{thm_deform}.
	\begin{theorem}\label{thm_natural_projection}
		The mapping
		\begin{equation}
			\begin{aligned}
				\pi^{Mp}: & & Mp(d) & \longrightarrow Sp(d)\\
				& & \widehat{S}_{W,m} & \longmapsto S_W
			\end{aligned}
		\end{equation}
		which associates a free symplectic matrix with generating function $W$ to a quadratic Fourier transform, is a surjective group homomorphism. Hence,
		\begin{equation}
			\pi^{Mp} \left( \widehat{S}_1 \widehat{S}_2 \right) = \pi^{Mp} \left( \widehat{S}_1 \right) \pi^{Mp} \left( \widehat{S}_2 \right).
		\end{equation}
		and the kernel of $\pi^{Mp}$ is given by
		\begin{equation}
			ker(\pi^{Mp}) = \lbrace \pm I \rbrace.
		\end{equation}
		Therefore, $\pi^{Mp}: Mp(d) \mapsto Sp(d)$ is a two-fold covering of the symplectic group.
	\end{theorem}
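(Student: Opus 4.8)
The plan is to build $\pi^{Mp}$ out of the \emph{symplectic covariance} of the metaplectic representation; in this approach well-definedness, the homomorphism property and surjectivity are essentially automatic, and the determination of the kernel is the only substantial point. I would first record how each generator operator conjugates a time--frequency shift $\pi(z)=M_\omega T_x$: a one-line computation in each case gives $\widehat{G}\,\pi(z)\,\widehat{G}^{-1}=c(z)\,\pi(G z)$ with a unimodular factor $c(z)$ --- for $\Vhat_P$ expand $P(t-x)^2$ (point map $(x,\omega)\mapsto(x,Px+\omega)$, i.e.\ the matrix $V_P$); for $\Mhat_{L,m}$ substitute $t\mapsto Lt$ (point map $(x,\omega)\mapsto(L^{-1}x,L^{T}\omega)$, i.e.\ $M_L$); for $\Jhat$, which conjugates like $\F$ since the scalar $i^{-d/2}$ cancels, use $\F M_\omega T_x\F^{-1}=e^{2\pi i x\cdot\omega}M_{-x}T_\omega$ (point map $(x,\omega)\mapsto(\omega,-x)$, i.e.\ $J$). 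Since every $\widehat{S}\in Mp(d)$ is a finite product of these generators (Theorem \ref{thm_factorization_meta} with Definition \ref{definition_quadratic_FT}), composing these identities gives
\begin{equation}
	\widehat{S}\,\pi(z)\,\widehat{S}^{-1}=c_{\widehat{S}}(z)\,\pi(S z),\qquad z\in\R^{2d},
\end{equation}
with $S$ the corresponding product of the matrices $J$, $M_L$, $V_P$, hence symplectic.

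Now \emph{define} $\pi^{Mp}(\widehat{S})$ to be this matrix $S$. It is well defined because distinct $z$ give distinct unitaries $\pi(z)$, so $S$ is read off uniquely from $\widehat{S}\,\pi(\cdot)\,\widehat{S}^{-1}$ with no appeal to a factorization; its range lies in $Sp(d)$ by the above; and the homomorphism law is immediate from $(\widehat{S}_1\widehat{S}_2)\,\pi(z)\,(\widehat{S}_1\widehat{S}_2)^{-1}=\widehat{S}_1(\widehat{S}_2\,\pi(z)\,\widehat{S}_2^{-1})\widehat{S}_1^{-1}$. That $\pi^{Mp}$ sends the quadratic Fourier transform $\widehat{S}_{W,m}=\Vhat_P\Mhat_{L,m}\Jhat\Vhat_Q$ to the free symplectic matrix $S_W$ then follows by composing the generator identities to get $\pi^{Mp}(\widehat{S}_{W,m})=V_PM_LJV_Q$ and comparing with $S_W=\SW$: reading off the blocks $A=L^{-1}Q$, $B=L^{-1}$, $D=PL^{-1}$, Proposition \ref{pro_symplectic_factorization} yields $S_W=V_{DB^{-1}}M_{B^{-1}}JV_{B^{-1}A}=V_PM_LJV_Q$. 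Surjectivity comes for free at the matrix level: every $S\in Sp(d)$ is a finite product of $J$, $M_L$, $V_P$ (Section \ref{sec_symp_meta}), each a $\pi^{Mp}$-image; equivalently, factor $S$ into two free symplectic matrices and apply the formula just proved.

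It remains to compute the kernel, and this is \textbf{the main obstacle}. If $\pi^{Mp}(\widehat{S})$ is the identity, then $\widehat{S}\,\pi(z)\,\widehat{S}^{-1}=c_{\widehat{S}}(z)\,\pi(z)$ for all $z$; since $z\mapsto c_{\widehat{S}}(z)$ is necessarily a character of $\R^{2d}$ and $Mp(d)$ contains no non-trivial time--frequency shift (only $\pi(0)=I$), one deduces that $\widehat{S}$ commutes with every $\pi(z)$, so Schur's lemma --- the Schrödinger representation being irreducible on $\Lt$ --- gives $\widehat{S}=c\,I$ with $|c|=1$. The inclusion $\{\pm I\}\subseteq\ker(\pi^{Mp})$ is cost-free, since $\Mhat_{I,0}=I$ and $\Mhat_{I,2}=-I$ both project onto $M_I$, the $2d\times 2d$ identity. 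The reverse inclusion --- that $c$ can only be $\pm1$ --- is where the ``$\Z_2$'' of the two-fold cover genuinely lives: in a correct accounting the Maslov index $m$ in $\Mhat_{L,m}$ is not arbitrary but determined by $\arg\det L$ up to an ambiguity of $2$ (so that for $\det L$ real and positive $i^m=\pm1$), equivalently the image $\widehat{S}\,e^{-\pi t^2}$ of a fixed Gaussian is determined up to sign by $\pi^{Mp}(\widehat{S})$ alone. Carrying this sign bookkeeping through an arbitrary word in the generators to conclude $\ker(\pi^{Mp})=\{\pm I\}$ --- whence $\pi^{Mp}\colon Mp(d)\to Sp(d)$ is a two-fold cover --- is the step that requires real care; everything else is routine.
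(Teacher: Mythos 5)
The paper itself offers no proof of this theorem: the appendix opens by deferring all proofs to de Gosson's textbook \cite{Gos11}. Your outline follows exactly the standard (de Gosson/Folland) route --- define $\pi^{Mp}$ through the covariance relation $\widehat{S}\,\pi(z)\,\widehat{S}^{-1}=c_{\widehat{S}}(z)\,\pi(Sz)$, read off well-definedness, the homomorphism law and surjectivity from it, and check $\widehat{S}_{W,m}\mapsto S_W$ against Proposition \ref{pro_symplectic_factorization} --- and all of those computations are correct as you state them.

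The genuine gap is the one you yourself flag and then do not close: the identity $\ker(\pi^{Mp})=\lbrace\pm I\rbrace$, which \emph{is} the content of ``two-fold cover.'' Your Schur-type argument only yields $\ker(\pi^{Mp})\subseteq\lbrace cI:|c|=1\rbrace$ (and even there, the claims that $c_{\widehat{S}}(z)$ is a character and that $Mp(d)$ contains no nontrivial time--frequency shift are asserted, not proved). The step from $|c|=1$ to $c=\pm1$ cannot be dismissed as bookkeeping, because with the paper's Definition \ref{def_gen_operator} taken at face value --- $m\in\lbrace0,1,2,3\rbrace$ unrestricted --- the operator $\Mhat_{I,1}=iI$ lies in the group generated by the generator operators, so the kernel of the naively defined projection would be $\lbrace\pm I,\pm iI\rbrace$ and the cover would be four-fold; the theorem as stated would be false. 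The correct statement requires building the Maslov constraint into the definition of $Mp(d)$ (namely $i^{m}\sqrt{|\det L|}$ is a fixed branch of $\sqrt{\det L}$, so that $m$ is determined mod $2$ by $L$) and then verifying, via the cocycle governing how Maslov indices add under composition of quadratic Fourier transforms, that a word in the generators projecting to the identity matrix must reduce to $\pm I$. That verification is the substance of the theorem; acknowledging that it ``requires real care'' does not supply it, so the proposal as written does not prove the kernel statement. For the purposes of this paper the omission is harmless --- the author only uses that the sign/phase ambiguity does not affect frame bounds --- but as a proof of Theorem \ref{thm_natural_projection} it is incomplete exactly where the theorem is nontrivial.
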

\end{appendix}


\end{document}